\newcommand{\cF}{\ensuremath{\mathcal{F}}}
\newcommand{\cL}{\ensuremath{\mathcal{L}}}
\newcommand{\cN}{\ensuremath{\mathcal{N}}}
\newcommand{\bC}{\ensuremath{\mathbb{C}}}
\newcommand{\bE}{\ensuremath{\mathbb{E}}}
\newcommand{\bN}{\ensuremath{\mathbb{N}}}
\newcommand{\bP}{\ensuremath{\mathbb{P}}}
\newcommand{\bR}{\ensuremath{\mathbb{R}}}
\newtheorem{theorem}{Theorem}[section]
\newtheorem{lemma}[theorem]{Lemma}
\newtheorem{proposition}[theorem]{Proposition}
\theoremstyle{remark}
\newtheorem{remark}[theorem]{Remark}
\begin{document}

\title[A class of fractional O-U processes]{A class of fractional Ornstein-Uhlenbeck processes mixed with a Gamma distribution}%

\author{Luigi Amedeo Bianchi}
\address{L.A.B. -- Dipartimento di Matematica, Universit\`a di Trento, via Sommarive 14, 38123 Povo (Trento), Italy}
\email{luigiamedeo.bianchi@unitn.it}

\author{Stefano Bonaccorsi}
\address{S.B. -- Dipartimento di Matematica, Universit\`a di Trento, via Sommarive 14, 38123 Povo (Trento), Italy}
\email{  stefano.bonaccorsi@unitn.it}

\author{Luciano Tubaro}
\address{L.T. -- Dipartimento di Matematica, Universit\`a di Trento, via Sommarive 14, 38123 Povo (Trento), Italy}
\email{  luciano.tubaro@unitn.it}

\begin{abstract}
	We consider a sequence of fractional Ornstein-Uhlenbeck processes, that are defined as solutions of a family of stochastic Volterra equations with kernel given by the Riesz derivative kernel, and leading  coefficients given by a sequence of  independent Gamma random variables.
	We construct a new process by taking  the empirical mean of this sequence.
	In our framework, the processes involved are not Markovian, hence the analysis of their asymptotic behaviour requires some ad hoc construction.
	In our main result, we prove the almost sure convergence in the space of trajectories of the empirical means to a given Gaussian process, which we characterize completely.
\end{abstract}

\keywords{Fractional Ornstein-Uhlenbeck processes, empirical means, Gamma mixing, stochastic Volterra equations}
\subjclass{60G22, 60G17}
\maketitle
\thispagestyle{empty}


\section{Introduction}

Recently, there has been a certain interest in the literature about the convergence of the \emph{empirical means} of a sequence of realizations of the solution of a stochastic differential equation (SDE) with random coefficients as a mean to construct new classes of stochastic processes.
For instance, K.~El-Sebaiy and co-authors, in a sequence of papers~\cite{Es-Tud2015F,Es-FarHil2020PMS,DouEs-Tud2020PMD} have constructed several classes of Ornstein-Uhlenbeck type processes led by a fractional Brownian motion $B^H(t)$ or a Hermite process $Z^H_q(t)$ of order $q \ge 1$.

The construction of this kind kind of processes
was introduced by \cite{IglTer1999EJP} in the analysis of a stationary Ornstein-Uhlenbeck process
\begin{equation}\label{eq:0}
	\mathrm{d}X_k(t) = -\alpha_k X_k(t) \, \mathrm{d}t + \mathrm{d}W(t),
\end{equation}
with initial condition $X_k(0) = 0$,
where
$W = \{W(t),\ t \ge 0\}$ is a real standard Brownian motion defined on a stochastic basis $(\Omega, \cF, \{\cF_t,\ t \in \bR\}, \bP)$
and $\{\alpha_k,\ k \ge 1\}$ is a sequence of independent, identically distributed random variables,
independent of $W$. With no loss of generality, we may assume that
the sequence
of the $\alpha_k$ is defined on a different probability space $(\bar\Omega, \bar\cF, \bar\bP)$.

The \emph{empirical means} are defined by
\begin{equation}
	\label{eq:emp}
	Y_n(t) = \frac1n \sum_{k=1}^n X_k(t).
\end{equation}
Under the assumption that the $\alpha_k$ are independent, identically distributed, with a Gamma distribution $\Gamma(\mu, \lambda)$, with shape parameter $\mu > \frac12$ and scale parameter $\lambda > 0$,
in~\cite{IglTer1999EJP} the authors prove that $\bar\bP$-a.s.~the sequence $(Y_n(t))_n$ converges to a process $Y(t)$ for any $t \in \bR$ and in $C([a,b])$ for arbitrary $a < b$, where the process $Y(t)$ is defined in law by
\begin{equation*}
	Y(t) = \int_{0}^t \left( \frac{\lambda}{t - s + \lambda} \right)^\mu \, \mathrm{d}W(s).
\end{equation*}

In order to extend the above construction, we recall the notion of stochastic Volterra processes defined as convolution processes between a given kernel and a Brownian motion $W$
\begin{align*}
	X(t) = \int_0^t g(t,s) \, \mathrm{d}W(s), \qquad t \ge 0.
\end{align*}
Such processes are related to (and somehow coincide with a subclass of) ambit processes, as introduced by Barndorff-Nielsen \cite{BarSch2007SAA,BarBenVer2011AMMfF}, and in particular to the so-called Brownian semi-stationary processes, see~\cite{BarSch2009AFM}.

These processes cannot be expressed in It\^o differential form \eqref{eq:0}, except in the case of an exponential kernel $g(t,s) = e^{\lambda (t-s)}$, due to the presence of a memory term.
It is however possible to describe their behaviour through an integral stochastic (Volterra) equation. 
Although this process is not Markovian, several properties can be studied, see for instance \cite{BonTub2003SAaA} for a survey of known results and the relation with the fractional Brownian motion.

In analogy with the existing literature, we aim to construct a class of stochastic Volterra processes via an aggregation procedure (i.e., taking the limit of the empirical means).
Thanks to this approach, we are able to prove some of its properties.
We shall refer to our processes as \emph{fractional Ornstein-Uhlenbeck} processes, since they are given as solutions of
the integral equation
\begin{equation}
	\label{eq:1}
	X_k(t) = -\alpha_k \left( g_\rho * X_k \right)(t) + W(t), \qquad k \ge 1, \quad t > 0,
\end{equation}
with initial condition $X_k(0) = 0$, where $*$ denotes the convolution product and
\begin{equation*}
	g_\rho(t) = \frac{1}{\Gamma(\rho)} t^{\rho-1}, \qquad t > 0,
\end{equation*}
denotes the fractional integration kernel for any $\rho > 0$, cf.~\cite{Rie1949AM}.
It is well known \cite{BonTub2003SAaA, CleDa1996ADANDLCSFMENRLMEA}
that the solution of \eqref{eq:1} is a Gaussian process defined in
terms of the
scalar resolvent equation
\begin{equation}
	\label{eq:2}
	s_\alpha(t)  + \alpha \int_0^t g_\rho(t-\tau) s_\alpha(\tau) \, d\tau = 1,
	\qquad t \ge 0, \quad \alpha \ge 0,
\end{equation}
by means of the formula
\begin{equation}
	\label{eq:def-Xk}
	X_k(t) = \int_{0}^t s_{\alpha_k}(t-\tau) \, \mathrm{d}W(\tau).
\end{equation}
The scalar resolvent function $s_\alpha(t)$ can be defined in terms of the well known Mittag-Leffler's function
$E_\rho(t)$ \cite{Mit1905AM};  this link will be exploited in order to prove finer properties of the function $s_\alpha(t)$, see Section~\ref{sec2}.
Notice further that the choice $\rho = 1$ reduces the problem to the evolution equation studied in \cite{IglTer1999EJP}.
In this case, $s_\alpha(t) = e^{-\alpha t}$ satisfies the semigroup property, $X_k$ is a Markov process and its behaviour at infinity is much easier to study than in the  case $1 < \rho \le 2$.

\subsection{Presentation of the results and outline of the paper}

In this paper, we assume that 
\begin{equation}\label{e1}
	\text{
		$\alpha_k$ are i.i.d., with a Gamma distribution $\Gamma(\mu, \lambda)$, $\mu, \lambda > 0$;
	}
\end{equation}
and we assume that the fractional integration parameter $\rho$ satisfies
\begin{equation*}
	\rho \in (1, 2].
\end{equation*}

Notice that condition \eqref{e1} is quite standard in the literature, compare, e.g., \cite{IglTer1999EJP}.
In our setting, we prove that the sequence of the \emph{empirical mean processes} $Y_n$ (given in \eqref{eq:emp}) converges
in $L^2(\bP)$, for fixed time,  $\bar\bP$-a.s., to a stochastic process $Y(t)$ given by the convolution Wiener integral
\begin{equation}
	\label{eq:4}
	Y(t) = \int_0^t \bar\bE[s_\alpha(t - s) ]  \, \mathrm{d}W(s),
\end{equation}
where
the $\bar{\bP}$ expectation $\bar{\bE}$ of the resolvent scalar kernel with respect to the random variable $\alpha$
can be explicitly computed and related to a generalized Wright-Fox function $G^\mu_\rho(z)$, see Section \ref{sec2.2} below.
In Section \ref{sez3}, we prove that $Y_n(t)$ converges pathwise on $[0,T]$, $\bar\bP$-a.s., for arbitrary $T > 0$.
First, in Theorem \ref{th:weak-conv}, we prove weak convergence of the laws in the space of continuous trajectories on $[0,T]$, by proving a suitable tightness result. Then, in Theorem \ref{th:path-conv}, we prove that the convergence holds almost surely, by using a particular application of the integration by parts formula.

Finally, in Section \ref{sez4} we consider the asymptotic behaviour of the limit process $Y(t)$ as $t \to \infty$.
Notice that in the previous section no condition was imposed on the values of the parameters $\mu$ and $\lambda$ defining the $\alpha$-distribution.
In this section, we shall impose the bound 
\begin{equation}\label{eq:condition9}
	\mu > \frac{1}{2\rho};
\end{equation}
notice that this condition is coherent with the condition $\mu > \frac12$ imposed in \cite{IglTer1999EJP} for the evolutionary problem ($\rho = 1$).

We show that the process $Y(t)$ converges in distribution, for $t \to \infty$, to some Gaussian random variable $\eta$ and we give a characterization of the limit as ``stationary'' solution of the problem.
The emphasis here comes from the fact that we cannot use the notion of invariant measures for SDEs, since our system is not Markovian.
However, if we consider the solution of a modified problem with initial time equal to $-\infty$, we have that this process has law equal to $\eta$ for all times.

\section{Some special functions}
\label{sec2}

\subsection{Mittag-Leffler's function}
We introduce the Mittag-Leffler's function $E_\rho(z)$ defined by the series
\begin{equation}
	\label{eq:5}
	E_\rho(z) = \sum_{k=0}^\infty  \frac{z^k}{\Gamma(\rho k + 1)}, \qquad
	z \in \bC,
\end{equation}
for any $\rho > 0$.
Notice that this is a generalization of the exponential function, obtained as a special case when $\rho = 1$.
This function was introduced by Mittag-Leffler in~\cite{Mit1905AM},
and since then studied by many authors.
Basic references for the next results are
\cite{ErdMagObeTri1955, GorKilMaiRog2020}.

The series in \eqref{eq:5} converges in the whole convex plane for $\rho > 0$.
For $x \in \bR$, $x \to \infty$, we get the following asymptotic expansion \cite[(3.4.15)]{GorKilMaiRog2020}: for any $m \ge 1$ we have
\begin{equation}\label{eq:ML-asypt-expans}
	E_\rho(-x) = - \sum_{k=1}^m \frac{(-x)^{-k}}{\Gamma[1 - k \rho]}  + O(|x|^{-m-1}).
\end{equation}

The case $0<\rho<1$ is somehow peculiar because the Mittag-Leffler's function $E_\rho(-x)$ is completely monotonic, that is for all $n \in \bN$, for all $x \in \bR_+$, $(-1)^{n}E_\rho^{(n)}(-x)\geq 0$.
It follows in particular that $E_\rho(-x)$ has no real zeroes for $x \in \bR_+$ when $0 < \rho < 1$.
Conversely, in the regime $1 < \rho < 2$, the Mittag-Leffler's function $E_\rho(-x)$ shows a damped oscillation converging to zero as
$x \to \infty$, as can be seen in Figure~\ref{fig:mittagleffler}.

\begin{figure}[ht]
	\begin{center}
		\includegraphics[width=.66 \textwidth]{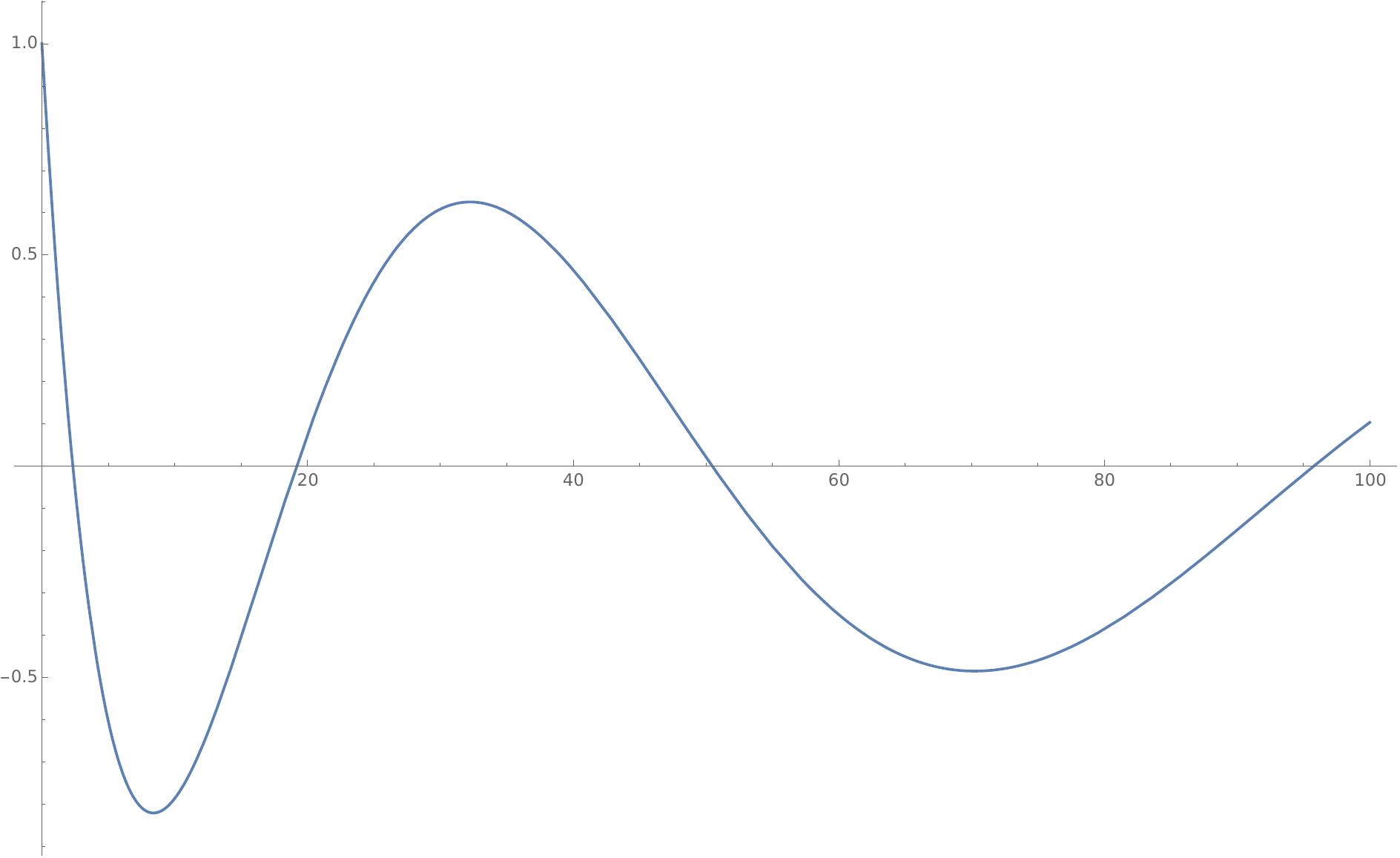}
	\end{center}
	\caption{The graph of the Mittag-Leffler's function $E_\rho(-x)$ with parameter $\rho = 1.9$. Plotted with Mathematica~\cite{Mathematica}.}
	\label{fig:mittagleffler}
\end{figure}

The global behaviour of the Mittag-Leffler's function $E_\rho(-x)$ is established in \cite[Corollary 3.1]{GorKilMaiRog2020}:
\begin{equation}
	\label{eq:ML-bdd}
	\forall \, x > 0: \quad |E_\rho(-x)| \le \frac{M}{1 + x},
\end{equation}
for some constant $M$ depending only on $\rho$.

\begin{lemma}
	For any $\rho > 0$, the derivative of the Mittag-Leffler's function $E_{\rho}(-x)$ satisfies
	\begin{equation}\label{eq:ML-deriv}
		\frac{d}{dx}E_\rho(-x) = -\frac1\rho E_{\rho,\rho}(-x),
	\end{equation}
	where $E_{\rho,\rho}(z)$ is the two-parametric Mittag-Leffler's function.
\end{lemma}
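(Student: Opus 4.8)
The plan is to prove the identity by direct term-by-term differentiation of the defining power series. Since the series \eqref{eq:5} converges on all of $\bC$, the map $x \mapsto E_\rho(-x)$ is entire, hence its series converges uniformly on compact subsets of $\bR$ and may be differentiated term by term. Writing $E_\rho(-x) = \sum_{k=0}^\infty (-1)^k x^k / \Gamma(\rho k + 1)$, differentiation gives
\begin{equation*}
\frac{d}{dx} E_\rho(-x) = \sum_{k=1}^\infty \frac{(-1)^k\, k\, x^{k-1}}{\Gamma(\rho k + 1)},
\end{equation*}
where the $k=0$ term drops out because it is constant.

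Next I would reindex the sum by setting $j = k-1$, turning it into a power series in $x^j$ whose coefficient is $(-1)^{j+1}(j+1)/\Gamma(\rho(j+1)+1)$. The key algebraic step is the functional equation $\Gamma(z+1) = z\,\Gamma(z)$ applied at $z = \rho j + \rho$, which yields $\Gamma(\rho j + \rho + 1) = \rho(j+1)\,\Gamma(\rho j + \rho)$. Substituting this cancels the factor $(j+1)$ and leaves the coefficient in the form $-\tfrac1\rho\, (-1)^j / \Gamma(\rho j + \rho)$, so that
\begin{equation*}
\frac{d}{dx} E_\rho(-x) = -\frac1\rho \sum_{j=0}^\infty \frac{(-1)^j x^j}{\Gamma(\rho j + \rho)}.
\end{equation*}

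Finally I would recognize the remaining series as the two-parametric Mittag-Leffler function, namely $\sum_{j=0}^\infty (-x)^j / \Gamma(\rho j + \rho) = E_{\rho,\rho}(-x)$, which delivers \eqref{eq:ML-deriv}. There is no genuine obstacle in this argument: the only points requiring any care are the justification of term-by-term differentiation, which is immediate from the entirety of $E_\rho$, and the bookkeeping in the reindexing together with the Gamma recursion. The latter is precisely the mechanism that converts the shape-parameter $1$ of $E_\rho = E_{\rho,1}$ into the shape-parameter $\rho$ of $E_{\rho,\rho}$ and produces the prefactor $1/\rho$.
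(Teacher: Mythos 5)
Your proof is correct. The paper states this lemma without giving any proof at all, treating it as a known property of Mittag-Leffler functions (it can be found in the standard references the paper cites, e.g.\ Gorenflo--Kilbas--Mainardi--Rogosin); your argument --- term-by-term differentiation of the entire power series, reindexing, and the Gamma recursion $\Gamma(\rho(j+1)+1)=\rho(j+1)\Gamma(\rho(j+1))$ to cancel the factor $(j+1)$ and produce the prefactor $1/\rho$ --- is exactly the standard derivation that justifies it, and it matches the paper's definition $E_{\rho,\rho}(z)=\sum_{k\ge 0} z^k/\Gamma(\rho(k+1))$.
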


Recall that, by definition, the two-parametric Mittag-Leffler function is (see for example~\cite{GorKilMaiRog2020}),
\begin{equation*}
	E_{\rho,\tau}(z) = \sum_{k=0}^\infty \frac{z^k}{\Gamma(\rho k+\tau)}, \qquad z \in \bC, \Re(\rho)>0, \tau\in\bC.
\end{equation*}
As for the original Mittag-Leffler's function $E_\rho(z)$, also $E_{\rho,\rho}(z)$ is an entire function of order $\omega = \frac{1}{\rho}$.
Moreover, it is proved in \cite[(1.2.5)]{PopSed2013JMS} that a similar asymptotic expansion to the one given in \eqref{eq:ML-asypt-expans} for the Mittag-Leffler's function holds for the two-parametric Mittag-Leffler's function:
\begin{equation*}
	E_{\rho,\rho}(-x) = - \sum_{k=1}^m \frac{(-x)^{-k}}{\Gamma[\rho - k \rho]}  + O(|x|^{-m-1}).
\end{equation*}
However, since $\frac{1}{\Gamma[-n]} = 0$ for any integer $n \ge 1$, we see that the previous estimate becomes, for any $m \ge 1$,
\begin{equation*}
	E_{\rho,\rho}(x) = x^{-1}  + O(|x|^{-m-1}).
\end{equation*}
We are thus led to the following global estimate for the behaviour of the two-parametric Mittag-Leffler's function $E_{\rho,\rho}(-x)$:
\begin{equation}
	\label{eq:ML2-bdd}
	\forall \, x > 0: \quad |E_{\rho,\rho}(-x)| \le \frac{M_2}{1 + x},
\end{equation}
for some constant $M_2$ depending only on $\rho$.

\subsection{A generalized Wright-Fox type function} \label{sec2.2}
In this paper, we will encounter the following special function,
\begin{equation*}
	G_\rho^\mu(z) = \sum_{k=0}^\infty a_k z^k = \sum_{k=0}^\infty \frac{\left(\mu\right)_k}{\Gamma(k \rho + 1)} z^k,
\end{equation*}
where $(\mu)_n = \frac{\Gamma(\mu + n)}{\Gamma(\mu)}$ is the Pochhammer symbol. This function has some similarities to the Mittag-Leffler function, and in fact both are special cases of the generalized Wright-Fox type function, defined as
\[
\prescript{}{p}{\Psi}_q\left[\begin{matrix}
	(a_1,A_1),\ldots,(a_p,A_p)\\
	(b_1,B_1),\ldots,(b_q,B_q)
\end{matrix};z\right]=\sum_{k=0}^{\infty}\frac{\prod_{i=1}^p \Gamma(a_i+A_i k)}{\prod_{j=1}^q \Gamma(b_j+B_j k)}  \frac{z^k}{k!},
\]
where the series converges, with $a_i,b_j\in \bC$ and $A_i, B_j \in \bR$, see~\cite{Luc2019BT}. 
We recover both the two-parametric Mittag-Leffler function $E_{\rho, \tau}(z)$ and $G_\rho^\mu(z)$ in $\prescript{}{p}{\Psi}_q$ with the following choices:
\[
E_{\rho, \tau}(z) = \prescript{}{1}{\Psi_1}\left[\begin{matrix}
	(1,1)\\
	(\tau,\rho)
\end{matrix};z\right] \qquad G_\rho^\mu(z) = \prescript{}{2}{\Psi_1}\left[\begin{matrix}
	(1,1),(\mu,1)\\
	(1,\rho)
\end{matrix};z\right]
\]

In the next lemma, we prove that $G_\rho^\mu(z)$ is well defined and that it is
an entire function, as was $E_\rho$.

\begin{lemma}
	Assume that
	$\rho > 1$.
	Then the function $G_{\rho}(z)$ is an entire function of the variable $z \in \bC$, of order $\omega = \frac{1}{\rho - 1}$.
\end{lemma}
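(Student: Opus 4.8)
The plan is to read off both the entirety of $G_\rho$ and its order directly from the asymptotic behaviour of the Taylor coefficients
\[
a_k = \frac{(\mu)_k}{\Gamma(k\rho + 1)} = \frac{\Gamma(\mu + k)}{\Gamma(\mu)\,\Gamma(k\rho + 1)}, \qquad k \ge 0,
\]
by invoking the classical coefficient characterization of the order of an entire function: if $f(z) = \sum_k a_k z^k$ has infinite radius of convergence, then its order is
\[
\omega = \limsup_{k \to \infty} \frac{k \ln k}{-\ln |a_k|}.
\]
In this way everything reduces to a sufficiently sharp asymptotic estimate of $\ln |a_k|$ as $k \to \infty$.

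First I would estimate the two factors separately by means of Stirling's formula. Writing $(\mu)_k = \Gamma(\mu + k)/\Gamma(\mu)$, one gets
\[
\ln (\mu)_k = k \ln k - k + O(\ln k),
\]
while
\[
\ln \Gamma(k\rho + 1) = \rho\, k \ln k + \rho\,(\ln\rho - 1)\, k + O(\ln k).
\]
Subtracting the first from the second,
\[
-\ln |a_k| = \ln \Gamma(k\rho+1) - \ln (\mu)_k = (\rho - 1)\, k \ln k + O(k),
\]
where the hypothesis $\rho > 1$ is exactly what guarantees that the leading coefficient $(\rho - 1)$ is strictly positive and that the $k\ln k$ contributions of numerator and denominator do not cancel.

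From this single estimate both conclusions follow. Since $-\ln|a_k| \to +\infty$ faster than linearly in $k$, we have $|a_k|^{1/k} \to 0$, so the defining series has infinite radius of convergence and $G_\rho$ is entire. Feeding the estimate into the coefficient formula, the $O(k)$ remainder is negligible against the $k \ln k$ leading term, and we obtain
\[
\omega = \limsup_{k\to\infty} \frac{k\ln k}{(\rho-1)\,k\ln k + O(k)} = \frac{1}{\rho-1},
\]
as claimed.

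I do not anticipate a genuine obstacle; the only point demanding care is the bookkeeping of the subleading terms in Stirling's formula, so as to be certain that the cancellation of the $k\ln k$ parts is complete and that precisely $(\rho-1)\,k\ln k$ survives. It is worth remarking that the restriction $\rho > 1$ is not merely technical: at $\rho = 1$ the leading terms cancel identically, the estimate degenerates, and indeed $G_1(z) = (1-z)^{-\mu}$ is not entire, consistently with the order $\frac{1}{\rho - 1}$ blowing up as $\rho \downarrow 1$.
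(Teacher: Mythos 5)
Your proof is correct and takes essentially the same approach as the paper's: both rest on Stirling asymptotics for the coefficients $a_k$ combined with the classical formula $\omega = \limsup_{k\to\infty} \frac{k\log k}{\log(1/|a_k|)}$. The only minor difference is that the paper establishes entirety separately via the ratio test, whereas you read both entirety and the order off the single estimate $-\log|a_k| = (\rho-1)\,k\log k + O(k)$, which is if anything slightly tidier.
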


\begin{proof}
	Since $G_\rho^\mu$ is defined in terms of a power series,
	to prove that it is an entire function
	it is sufficient to study the radius of convergence.
	By the ratio test, it is sufficient to study the behaviour of
	\begin{align*}
		R & = \lim_{n \to \infty} \frac{a_n}{a_{n+1}} \\
		& = \lim_{n \to \infty} \frac{ \Gamma(\mu) \Gamma(\rho n + \rho + 1)}{ \Gamma(\mu + n + 1) } \frac{ \Gamma(\mu + n) } { \Gamma(\mu) \Gamma(\rho n + 1)}=  \frac{1}{\mu + n }\frac{\Gamma(\rho n + \rho + 1)}{\Gamma(\rho n + 1)}.
	\end{align*}
	Stirling's series can be used to find the asymptotic behaviour of the last ratio~\cite{TriErd1951PJM}:
	\begin{equation*}
		\frac{\Gamma(\rho n + 1)}{\Gamma(\rho n + \rho + 1)} = (\rho n)^{-\rho} \left(1 + O(n^{-1}) \right),
	\end{equation*}
	therefore, by taking the limit
	\begin{equation*}
		R = \lim_{n \to \infty} \frac{a_n}{a_{n+1}} = \lim_{n \to \infty} \frac{n^{\rho}}{\mu + n},
	\end{equation*}
	we see that $R$ is infinite provided $\rho > 1$.
	
	By using again Stirling's approximation, it is possible to compute the \emph{order} $\omega$ of the entire function $G_\rho^\mu(z)$.
	Recall that $\omega$ is the infimum of all $\kappa$ such that
	\begin{equation*}
		\max_{|z|=r} |G_\rho^\mu(z)| < e^{r^\kappa},
	\end{equation*}
	eventually for $r \to \infty$.
	For the entire function $G_\rho^\mu(z) = \sum a_k z^k$, the order is given by the formula
	\begin{equation*}
		\omega = \limsup_{k \to \infty} \frac{k \log(k)}{\log(1/|a_k|)};
	\end{equation*}
	for large $k$, we have
	\begin{align*}
		\frac{1}{a_k} \approx& \frac{e^{-k \rho} (k \rho)^{k \rho + 1/2}}{e^{-k} (k)^{k + \frac12 + \mu}},
		\\
		\log(1/a_k) \approx& (\rho - 1) k \log(k) + (1 - \rho) k + (k\rho + 1/2) \log(\rho) - \mu \log(k),
	\end{align*}
	which implies
	$\omega = \frac{1}{\rho-1}$.
\end{proof}

\begin{remark}
	Further properties of the function $G_\rho^\mu$ will be proved in the sequel.
	In particular, as a corollary of the computation in Lemma \ref{le:Cauchy-L2},
	we obtain the following asymptotic behaviour for large $x$:
	\begin{equation}
		\label{eq:G-is-square-int}
		\left| G_\rho^\mu(-x) \right| \le C \left(\frac1x + \frac1{x^\mu}\right), \qquad x \to \infty,
	\end{equation}
	for $\mu \not= 1$.
	A plot of he restriction of $G_\rho^\mu$ to the negative real numbers can be seen in Figure~\ref{fig:generalized_ML}.
	\begin{figure}[ht]\begin{center}
			\includegraphics[width=.66 \textwidth]{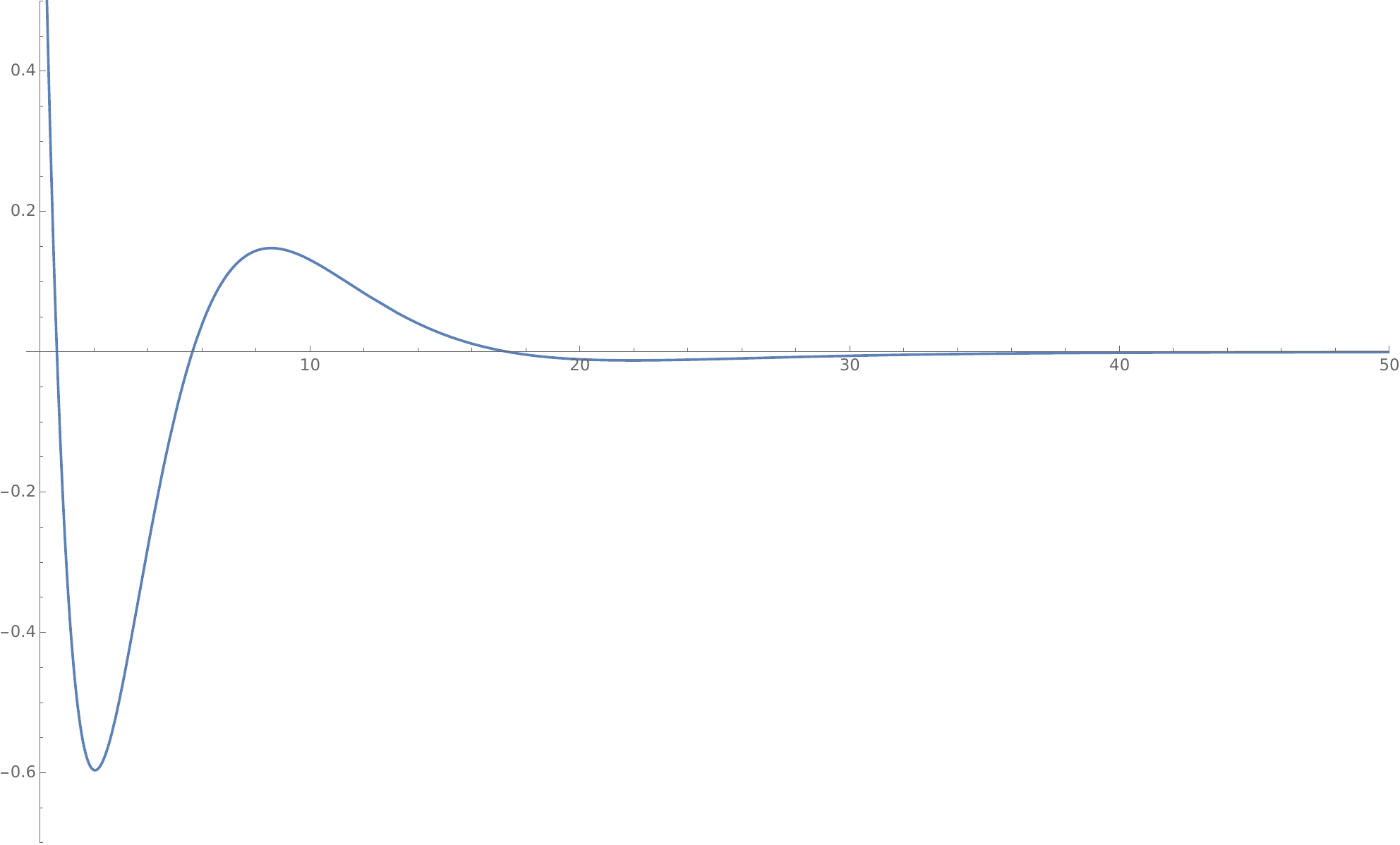}
		\end{center}
		\caption{The graph of the function $G_\rho^\mu(-x)$ with parameters $\mu=4$ and $\rho = 1.9$. Plotted with Mathematica~\cite{Mathematica}.}
		\label{fig:generalized_ML}
	\end{figure}
\end{remark}

\section{The empirical mean process}
\label{sez3}

We consider in this section the solution of the fractional evolution equation~\eqref{eq:1}
and the properties of the empirical mean process $Y_n(t)$ defined in~\eqref{eq:emp}.

The relation between Mittag-Leffler's function and the fractional derivative of Riemann-Liouville type is specified through the following relation \cite[(3.7.44)]{GorKilMaiRog2020}
\begin{equation*}
	E_\rho(\lambda t^\rho) - {\lambda} \int_0^t E_\rho(\lambda s^\rho) g_\rho(t-s) \, \mathrm{d}s = 1.
\end{equation*}
By comparing it with the definition of the scalar resolvent function $s_\alpha(t)$ in~\eqref{eq:2} we obtain
the relation
\begin{equation}
	\label{eq:res-ML}
	s_\alpha(t) = E_\rho(- \alpha t^\rho), \qquad t > 0, \text{ for all $\alpha > 0$.}
\end{equation}

For the sake of completeness, let us recall the behaviour of the Gamma distribution.

\begin{proposition}
	On the probability space $(\bar \Omega, \bar \cF, \bar \bP)$,
	assume that  $\alpha$ is a random variable with a Gamma distribution of parameters $\mu$ and $\lambda$, i.e.,
	$\alpha$ has the density
	\begin{equation*}
		f(x \mid \mu, \lambda) = \frac{1}{\Gamma(\mu)} \lambda e^{-\lambda x} \left(\lambda x\right)^{\mu - 1}
		\qquad x > 0;
	\end{equation*}
	then the moments of $\alpha$ are given by
	\begin{equation}\label{eq:7}
		\bar{\bE}[\alpha^n] = \frac{1}{\lambda^n} \left(\mu\right)_n.
	\end{equation}
\end{proposition}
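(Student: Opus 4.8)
The plan is to compute the moment directly from the definition of the expectation against the Gamma density, reducing everything to the defining Euler integral of the Gamma function. First I would write
\[
\bE[\alpha^n] = \int_0^\infty x^n f(x \mid \mu, \lambda)\,\mathrm{d}x = \frac{\lambda^\mu}{\Gamma(\mu)} \int_0^\infty x^{n+\mu-1} e^{-\lambda x}\,\mathrm{d}x,
\]
where I have merged the explicit powers of $x$ coming from $x^n$ and $(\lambda x)^{\mu-1}$ and pulled the constant $\lambda^\mu/\Gamma(\mu)$ out of the integral. The point is that the surviving integrand is, up to the normalizing constant, an \emph{unnormalized} Gamma density with the shifted shape parameter $\mu + n$.

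The key step is the substitution $u = \lambda x$, which turns the remaining integral into the standard Euler representation $\int_0^\infty u^{n+\mu-1} e^{-u}\,\mathrm{d}u = \Gamma(n+\mu)$ at the cost of a factor $\lambda^{-(n+\mu)}$. Collecting the powers of $\lambda$ gives $\lambda^{\mu}\cdot\lambda^{-(n+\mu)} = \lambda^{-n}$, so that
\[
\bE[\alpha^n] = \frac{1}{\lambda^n}\,\frac{\Gamma(\mu+n)}{\Gamma(\mu)} = \frac{1}{\lambda^n}\,(\mu)_n,
\]
which is precisely \eqref{eq:7} once one recalls that $(\mu)_n = \Gamma(\mu+n)/\Gamma(\mu)$. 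Equivalently, one can bypass the substitution altogether by invoking the normalization of the Gamma law: since $\int_0^\infty \lambda^\nu x^{\nu-1} e^{-\lambda x}\,\mathrm{d}x = \Gamma(\nu)$ for every shape parameter $\nu > 0$, applying this with $\nu = \mu + n$ and dividing by the normalizing constant for $\nu = \mu$ yields the same formula immediately.

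There is no real obstacle here; the statement is a routine moment computation and the only point requiring a little care is the bookkeeping of the powers of $\lambda$ under the rescaling, so that the final exponent comes out as $-n$ rather than, say, $-(n+\mu)$. I would therefore present the argument in the compact substitution form above, which makes the cancellation transparent and directly exhibits the Pochhammer symbol.
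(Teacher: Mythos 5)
Your computation is correct: the reduction to the Euler integral via $u=\lambda x$ gives $\bE[\alpha^n]=\lambda^{-n}\Gamma(\mu+n)/\Gamma(\mu)=\lambda^{-n}(\mu)_n$ exactly as claimed. Note that the paper offers no proof at all for this proposition (it is stated as a recollection of a standard fact about the Gamma law), so your argument simply supplies the routine verification the authors left implicit, and it is the natural one.
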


We apply the previous proposition to compute the expected value (with respect to the measure $\bar\bP$) of the random process $s_\alpha(t)$.

\begin{lemma}
	On the probability space $(\bar \Omega, \bar \cF, \bar \bP)$,
	assume that  $\alpha$ has a Gamma distribution of parameters $\mu$ and $\lambda$.
	Then the expected value of $s_\alpha(t)$
	satisfies
	\begin{equation*}
		\bar \bE[ s_\alpha(t)] = G_{\rho}^\mu(- t^\rho/\lambda) =
		\sum_{k=0}^\infty  \frac{\left(\mu\right)_k}{\Gamma(k\rho+1)}   \left( -\frac{t^{\rho}}{\lambda} \right)^k.
	\end{equation*}
\end{lemma}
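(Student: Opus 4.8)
The plan is to combine the series representation of the Mittag-Leffler function with the moment formula for the Gamma distribution, the interchange of expectation and summation being the only point requiring care. Starting from the identity~\eqref{eq:res-ML}, namely $s_\alpha(t) = E_\rho(-\alpha t^\rho)$, I would expand via the defining series~\eqref{eq:5} to write
\begin{equation*}
s_\alpha(t) = \sum_{k=0}^\infty \frac{(-\alpha t^\rho)^k}{\Gamma(\rho k + 1)}.
\end{equation*}
Taking the expectation with respect to $\bar\bP$, interchanging $\bar\bE$ with the infinite sum (justified below), and inserting the moment formula~\eqref{eq:7}, $\bar\bE[\alpha^k] = \lambda^{-k}(\mu)_k$, would then yield
\begin{equation*}
\bar\bE[s_\alpha(t)] = \sum_{k=0}^\infty \frac{(-t^\rho)^k}{\Gamma(\rho k + 1)} \, \bar\bE[\alpha^k] = \sum_{k=0}^\infty \frac{(\mu)_k}{\Gamma(\rho k + 1)} \left( -\frac{t^\rho}{\lambda} \right)^k,
\end{equation*}
which is precisely $G_\rho(-t^\rho/\lambda)$ by the definition of the generalized Mittag-Leffler function.

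The main obstacle, and the only genuinely nontrivial step, is the justification of the interchange of expectation and summation. I would settle this by Tonelli's theorem applied to the series of absolute values. One checks that
\begin{equation*}
\sum_{k=0}^\infty \bar\bE\!\left[ \frac{(\alpha t^\rho)^k}{\Gamma(\rho k + 1)} \right] = \sum_{k=0}^\infty \frac{(\mu)_k}{\Gamma(\rho k + 1)} \left( \frac{t^\rho}{\lambda} \right)^k = G_\rho\!\left( \frac{t^\rho}{\lambda} \right) < \infty,
\end{equation*}
where the finiteness is guaranteed by the preceding lemma, which shows that $G_\rho$ is an entire function for $\rho > 1$; in particular its defining series converges absolutely for every value of the argument, including the positive value $t^\rho/\lambda$. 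Since the series of absolute expectations is finite, Fubini-Tonelli licenses the term-by-term passage of $\bar\bE$ through the sum, and the computation above is rigorous. No deeper analysis is required, as all the analytic content has already been packaged into the entirety of $G_\rho$ and the closed form~\eqref{eq:7} for the Gamma moments.
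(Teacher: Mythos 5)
Your proof follows essentially the same route as the paper's: expand $s_\alpha(t) = E_\rho(-\alpha t^\rho)$ via the series \eqref{eq:5}, interchange $\bar\bE$ with the sum, and insert the moment formula \eqref{eq:7}. The only difference is that the paper invokes Fubini's theorem without further comment, whereas you make the Tonelli justification explicit by observing that the series of absolute expectations equals $G_\rho(t^\rho/\lambda) < \infty$ (using entirety of $G_\rho$ for $\rho > 1$) --- a welcome addition, but not a different argument.
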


\begin{proof}
	Let us consider the representation of the scalar resolvent kernel $s_\alpha(t)$ in terms of the Mittag-Leffler's function $E_\rho(x)$ given in \eqref{eq:res-ML} and the definition of the Mittag-Leffler's function as an entire function given in~\eqref{eq:5}:
	\begin{equation*}
		\bar\bE[ s_\alpha(t) ] =  \bar\bE \left[ \sum_{k=0}^\infty  \frac{(-1)^k}{\Gamma(k\rho+1)} \alpha^k t^{k\rho} \right].
	\end{equation*}
	By an application of Fubini's theorem we obtain
	\begin{equation*}
		\bar\bE[ s_\alpha(t) ] =   \sum_{k=0}^\infty  \frac{(-1)^k}{\Gamma(k\rho+1)}\bar\bE \left[ \alpha^k \right] t^{k\rho},
	\end{equation*}
	and by \eqref{eq:7} we obtain
	\begin{equation*}
		\bar\bE[ s_\alpha(t) ] =   \sum_{k=0}^\infty  \frac{(-1)^k}{\Gamma(k\rho+1)}  \left(\mu\right)_k \left( \frac{t^{\rho}}{\lambda} \right)^k. \qedhere
	\end{equation*}
\end{proof}

\subsection{The limit of the empirical mean process}

In this section, we will study the asymptotic behaviour of the empirical mean process
\begin{equation*}
	Y_n(t) = \int_{0}^t \frac{1}{n} \sum_{k=1}^n s_{\alpha_k}(t - \tau) \, \mathrm{d}W(\tau).
\end{equation*}
By the law of the large numbers
\begin{equation*}
	\frac{1}{n} \sum_{k=1}^n s_{\alpha_k}(t) \longrightarrow \bar\bE[s_\alpha(t)], \qquad t > 0,
\end{equation*}
hence a natural candidate for the limit process of $Y_n(t)$ is given by
\begin{equation*}
	Y(t) = \int_0^t G_{\rho}^\mu\left(-\tfrac{(t-s)^\rho}{\lambda}\right) \, \mathrm{d}W(s).
\end{equation*}

\begin{theorem}\label{thm:l2-conv-Yn}
	The sequence $Y_n(t)$  converges in $L^2(\Omega, \bP)$, uniformly in $[0,T]$,
	to $Y(t)$, $\bar\bP$-a.s.
\end{theorem}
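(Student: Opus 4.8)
The plan is to condition on the realization of the sequence $\{\alpha_k\}$, i.e.\ to fix $\bar\omega \in \bar\Omega$ and work on the probability space $(\Omega, \cF, \bP)$ carrying the Brownian motion $W$. For fixed $\bar\omega$ the difference $Y_n(t) - Y(t)$ is a Wiener integral of a deterministic kernel, so the It\^o isometry applies. Setting
\[
h_n(u) = \frac1n \sum_{k=1}^n s_{\alpha_k}(u), \qquad h(u) = \bar\bE[s_\alpha(u)] = G_\rho\!\left(-\tfrac{u^\rho}{\lambda}\right),
\]
a change of variables $u = t - \tau$ gives $\bE\big[|Y_n(t) - Y(t)|^2\big] = \int_0^t |h_n(u) - h(u)|^2 \, \mathrm{d}u$. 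Since the integrand is non-negative, the right-hand side is non-decreasing in $t$, whence
\[
\sup_{t \in [0,T]} \bE\big[|Y_n(t) - Y(t)|^2\big] = \int_0^T \big|h_n(u) - h(u)\big|^2 \, \mathrm{d}u,
\]
and the theorem reduces to showing that this last integral tends to $0$ as $n \to \infty$, $\bar\bP$-a.s.

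The core of the argument is the pointwise convergence of $h_n$ to $h$. For each fixed $u$, the random variables $s_{\alpha_k}(u)$ are i.i.d.\ and bounded, so the strong law of large numbers yields $h_n(u) \to h(u)$ for $\bar\bP$-almost every $\bar\omega$. The delicate point is that the exceptional null set produced by the SLLN depends on $u$, and its uncountable union over $u$ need not be negligible; exchanging the ``for all $u$'' and ``$\bar\bP$-a.s.'' quantifiers is the crux of the proof. I would resolve this by a Fubini argument: the set $A = \{(u, \bar\omega) : h_n(u,\bar\omega) \not\to h(u)\}$ is jointly measurable (each $s_{\alpha_k}$ is continuous in $u$ and measurable in $\bar\omega$), and every $u$-section of $A$ is $\bar\bP$-null, so $A$ has zero $(\mathrm{d}u \times \bar\bP)$-measure. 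By Tonelli's theorem, for $\bar\bP$-almost every $\bar\omega$ the corresponding $\bar\omega$-section of $A$ has Lebesgue measure zero, i.e.\ $h_n(u) \to h(u)$ for Lebesgue-a.e.\ $u \in [0,T]$.

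Finally, the bound \eqref{eq:ML-bdd} gives $|s_{\alpha_k}(u)| = |E_\rho(-\alpha_k u^\rho)| \le M$ for every $u \ge 0$, so that $|h_n(u)| \le M$ and $|h(u)| \le M$, and the integrand $|h_n(u) - h(u)|^2 \le 4M^2$ is dominated by an integrable constant on $[0,T]$. Fixing a generic $\bar\omega$ as above and applying the dominated convergence theorem together with the a.e.\ pointwise convergence, I obtain $\int_0^T |h_n(u) - h(u)|^2 \, \mathrm{d}u \to 0$. In view of the reduction of the first paragraph, this is precisely the claimed uniform $L^2(\bP)$ convergence, $\bar\bP$-a.s. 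I expect the only genuinely delicate step to be the Fubini/Tonelli exchange of quantifiers; the remaining estimates are routine consequences of the boundedness \eqref{eq:ML-bdd} of the Mittag-Leffler function.
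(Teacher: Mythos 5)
Your proof is correct, and it reaches the stated conclusion by a genuinely different route from the paper's. Both arguments share the same reduction: by the It\^o isometry everything comes down to showing $\int_0^T |h_n(u)-h(u)|^2\,\mathrm{d}u \to 0$, $\bar\bP$-a.s. But where you use the ordinary strong law of large numbers pointwise in $u$ and then repair the $u$-dependent null sets with a Fubini--Tonelli argument followed by dominated convergence, the paper avoids the quantifier exchange altogether by invoking a \emph{uniform} law of large numbers (Lemma 2.4 of the Newey--McFadden reference): since $s_\alpha(t)=E_\rho(-\alpha t^\rho)$ is continuous in $t$ on the compact interval $[0,T]$ and bounded by $M$ via \eqref{eq:ML-bdd}, one gets directly $\sup_{t\in[0,T]}\bigl|\tfrac1n\sum_{k=1}^n s_{\alpha_k}(t)-\bar\bE[s_\alpha(t)]\bigr|\to0$, $\bar\bP$-a.s., after which the integral bound is immediate. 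The trade-off is as follows. Your argument is more elementary and self-contained (bounded i.i.d.\ summands, joint measurability, and dominated convergence are all that is needed, with no external ULLN citation), but it yields only almost-everywhere convergence of the kernels, hence the conclusion in the form $\sup_{t\in[0,T]}\bE|Y_n(t)-Y(t)|^2\to0$ --- which is what the theorem literally asserts. The paper's uniform convergence of the kernels is stronger, and the paper exploits it to place the supremum \emph{inside} the expectation, writing $\bE\sup_{t\in[0,T]}|Y_n(t)-Y(t)|^2\le 4\int_0^T\bigl|\tfrac1n\sum_{k=1}^n s_{\alpha_k}(t)-\bar\bE[s_\alpha(t)]\bigr|^2\,\mathrm{d}t$; note, however, that this Doob-type factor-$4$ bound is itself delicate, since $t\mapsto Y_n(t)-Y(t)$ is a stochastic convolution whose kernel depends on $t$, not a martingale, so the stronger $\bE\sup$ formulation would in any case require an additional device (for instance the integration-by-parts trick used later for Theorem~\ref{th:path-conv}, which works here because $f_n(0)-f(0)=0$). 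In short: your pointwise-plus-Fubini LLN step buys simplicity and delivers exactly the $\sup_t\bE$ statement, while the paper's uniform LLN buys uniform kernel convergence and (modulo the maximal-inequality caveat) the formally stronger $\bE\sup_t$ conclusion.
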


\begin{proof}
	In order to prove the convergence of $Y_n$ to $Y$, we first notice that
	\begin{equation*}
		f_n(t) = \frac{1}{n} \sum_{k=1}^n s_{\alpha_k}(t)
	\end{equation*}
	satisfies, by estimate \eqref{eq:ML-bdd},
	\begin{equation*}
		|f_n(t)| \le M;
	\end{equation*}
	therefore, the same applies to the function
	$G_{\rho}^\mu(- t^\rho/\lambda)$, and $Y(t)$ is a Gaussian process.
	
	Moreover,
	the function $s_\alpha(t)$ is continuous and bounded for $\alpha \in \bR_+$ and $t \in [0,T]$, for arbitrary $T > 0$, then the uniform law of large numbers  \cite[Lemma 2.4]{NewMcF1994HoE} implies that $G_{\rho}^\mu(- t^\rho/\lambda)$ is continuous and bounded in $t \in [0,T]$ and
	\begin{equation*}
		\sup_{t \in [0,T]} \left| \frac{1}{n} \sum_{k=1}^n s_{\alpha_k}(t) - \bar\bE[s_\alpha(t)] \right| \to 0 \quad \bar\bP-a.s.
	\end{equation*}
	It follows from the dominated convergence theorem that
	\begin{equation*}
		\bE \sup_{t \in [0,T]} |Y_n(t) - Y(t)|^2 \le 4 \int_0^T \left| \frac{1}{n} \sum_{k=1}^n s_{\alpha_k}(t) - \bar\bE[s_\alpha(t)] \right|^2 \, \mathrm{d}t \to 0 \quad \bar\bP-a.s. \qedhere
	\end{equation*}
\end{proof}

\subsection{Weak convergence of the empirical mean process in \texorpdfstring{$C([0,T])$}{C[0,T]}}

We have proved before that $Y_n(t)$ and $Y(t)$ are, $\bar\bP$-a.s., Gaussian, continuous stochastic processes on $(\Omega, \cF, \bP)$.

In the remaining of this section, the whole construction is thought to hold $\bar\bP$-a.s., meaning that we consider $\alpha$ given and we are in the space $(\Omega, \cF, \bP)$.
We aim to consider the
$\bP$-weak convergence in the space $C([0,T])$ of $Y_n$ to $Y$:
\begin{multline*}
	Y_n \Rightarrow Y \text{ if and only if $\phi(Y_n) \to \phi(Y)$ in distribution, } \\  \text{for every bounded Borel measurable $\phi: C([0,T]) \to \bR$.}
\end{multline*}

\begin{theorem}
	\label{th:weak-conv}
	The sequence $Y_n$ $\bP$-weakly converges to the process $Y$ in the space $C([0,T])$, $\bar\bP$-almost surely.
\end{theorem}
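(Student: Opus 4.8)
The plan is to combine Prokhorov's theorem with the finite-dimensional convergence that is already essentially contained in Theorem~\ref{thm:l2-conv-Yn}. Since $Y_n$ and $Y$ are centered Gaussian processes (being Wiener integrals), and since Theorem~\ref{thm:l2-conv-Yn} gives $Y_n(t) \to Y(t)$ in $L^2(\Omega,\bP)$ for every fixed $t$, $\bar\bP$-a.s., the vector $(Y_n(t_1),\dots,Y_n(t_m))$ converges in $L^2(\bP)$, hence in law, to $(Y(t_1),\dots,Y(t_m))$ for every finite collection of times. Thus the finite-dimensional distributions converge $\bar\bP$-a.s., and it only remains to show that the laws of $Y_n$ on $C([0,T])$ form a tight family, $\bar\bP$-almost surely; weak convergence then follows.

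For tightness I would use Kolmogorov's criterion, which for the centered Gaussian processes $Y_n$ reduces to a uniform-in-$n$ bound on the second moment of the increments. Writing $Y_n(t)=\int_0^t f_n(t-\tau)\,\mathrm{d}W(\tau)$ with $f_n(t)=\frac1n\sum_{k=1}^n s_{\alpha_k}(t)$, the It\^o isometry gives, for $0\le s<t\le T$,
\[
\bE\bigl|Y_n(t)-Y_n(s)\bigr|^2 = \int_0^s |f_n(t-\tau)-f_n(s-\tau)|^2\,\mathrm{d}\tau + \int_s^t |f_n(t-\tau)|^2\,\mathrm{d}\tau .
\]
The second term is at most $M^2|t-s|$ by \eqref{eq:ML-bdd}, so the crux is a modulus-of-continuity estimate for $f_n$ that is uniform in $n$.

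To obtain this I would estimate the increments of the single kernels $s_\alpha(t)=E_\rho(-\alpha t^\rho)$, cf.~\eqref{eq:res-ML}. Differentiating and using \eqref{eq:ML-deriv} together with the global bound \eqref{eq:ML2-bdd} gives
\[
\left|\tfrac{\mathrm{d}}{\mathrm{d}r}s_\alpha(r)\right| = \alpha\, r^{\rho-1}\,\bigl|E_{\rho,\rho}(-\alpha r^\rho)\bigr| \le M_2\,\alpha\, r^{\rho-1},
\]
whence, for $\rho\ge1$, $|s_\alpha(t)-s_\alpha(s)|\le C\,\alpha\,|t-s|$ on $[0,T]$. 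Interpolating this Lipschitz bound with the trivial bound $|s_\alpha(t)-s_\alpha(s)|\le 2M$ coming from \eqref{eq:ML-bdd} yields, for every $\theta\in(0,1)$,
\[
|s_\alpha(t)-s_\alpha(s)| \le C_\theta\,\alpha^\theta\,|t-s|^\theta .
\]
Averaging over $k$ gives $|f_n(t)-f_n(s)|\le C_\theta\,|t-s|^\theta\cdot\frac1n\sum_{k=1}^n \alpha_k^\theta$. Since the Gamma law has moments of every order (by \eqref{eq:7}, $\bar\bE[\alpha^\theta]<\infty$ for any $\theta\ge0$), the strong law of large numbers ensures that $\bar\bP$-a.s.\ the quantity $A:=\sup_n \frac1n\sum_{k=1}^n\alpha_k^\theta$ is finite, so that $\bar\bP$-a.s.\ the $f_n$ are uniformly $\theta$-H\"older. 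Feeding this back, $\bE|Y_n(t)-Y_n(s)|^2\le C|t-s|^{2\theta}$ uniformly in $n$ (for $|t-s|\le1$ and $2\theta\le1$), and the Gaussian moment identity $\bE|Y_n(t)-Y_n(s)|^{2p}=(2p-1)!!\,(\bE|Y_n(t)-Y_n(s)|^2)^p$ upgrades this to $\bE|Y_n(t)-Y_n(s)|^{2p}\le C_p|t-s|^{2p\theta}$. Choosing $p$ with $2p\theta>1$ and recalling $Y_n(0)=0$, the Kolmogorov tightness criterion applies and gives tightness $\bar\bP$-a.s.

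The main obstacle is precisely this uniform H\"older control of $f_n$: the derivative of $s_\alpha$ carries a factor $\alpha$ and is singular at the origin, so no estimate can be uniform in $\alpha$ alone. The interpolation step is what converts the $\alpha$-linear Lipschitz constant into an $\alpha^\theta$ factor with $\theta<1$, which is exactly what makes the empirical averages $\frac1n\sum_k\alpha_k^\theta$ convergent by the law of large numbers and hence makes the whole family equicontinuous $\bar\bP$-a.s. Once tightness and finite-dimensional convergence are both established, Prokhorov's theorem yields $Y_n\Rightarrow Y$ in $C([0,T])$, $\bar\bP$-almost surely.
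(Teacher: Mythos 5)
Your proof is correct, and its overall skeleton is the one the paper uses (finite-dimensional convergence via Gaussianity plus $L^2$ convergence, then a uniform-in-$n$ moment bound on increments, upgraded by the Gaussian moment identity and made $\bar\bP$-a.s.\ uniform via the strong law of large numbers). Where you genuinely diverge is in the key kernel estimate inside the tightness lemma. The paper rescales, substituting $v=\alpha^{1/\rho}u$ so that the increment of $s_\alpha$ becomes an increment of the fixed function $s_1$, and then uses the decay bound $|s_1'(x)|\le M_3/(1+x)$; this yields the Lipschitz-type estimate $\bE|X_k(t)-X_k(s)|^2\le C\bigl(1+\alpha_k^{2/\rho}\bigr)|t-s|$, and the SLLN is applied to the empirical means of $\alpha_k^{2/\rho}$. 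You instead bound $|s_\alpha'(r)|\le M_2\,\alpha\,r^{\rho-1}\le C\alpha$ on $[0,T]$ and interpolate this $\alpha$-linear Lipschitz bound against the uniform bound $2M$ to get a H\"older-$\theta$ modulus with constant $C_\theta\alpha^\theta$, $\theta\le\tfrac12$, applying the SLLN to $\alpha_k^\theta$. Your route is more elementary (no scaling trick, no use of the normalized resolvent $s_1$) and needs only a low moment of $\alpha$, at the price of a weaker exponent $|t-s|^{2\theta}$ instead of $|t-s|$ in the second-moment bound — harmless, since both arguments must pass through the Gaussian upgrade to reach an exponent $1+\delta>1$ anyway. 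One point to make explicit: your Lipschitz bound on $s_\alpha$ requires $\rho\ge1$ (otherwise $r^{\rho-1}$ blows up at the origin); this is not a real loss, as the paper's own inequality $x^{\rho-1}/(1+x^\rho)\le M_3/(M_2(1+x))$ fails for $\rho<1$ too, so both proofs live in the regime $1\le\rho\le2$ that the paper is concerned with.
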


By \cite[Theorems 12.3 and 8.1]{Bil1968},  weak convergence of $Y_n$ to $Y$ follows if we prove that the following two conditions hold
\begin{enumerate}
	\item convergence of the finite dimensional distributions, and
	\item the moment condition:
	\begin{equation}
		\label{eq:tight}
		\bE[|Y_n(t) - Y_n(s)|^\kappa] \le C(\kappa, \delta) |t-s|^{1+\delta},
	\end{equation}
	where $C(\kappa, \delta)$ depends on the constants $\kappa$ and $\delta$ only,
	compare \cite[formula (12.51)]{Bil1968}.
\end{enumerate}

We shall prove each of the above-described conditions in the following two lemmas, which therefore together imply the thesis of the theorem.

\begin{lemma}
	\label{le:finite-distrib}
	For any $k \in \bN$, $t_1, \dots, t_k \in [0,T]$, $x_1, \dots, x_k \in \bR$,
	it holds that  $\bP(Y_n(t_1) \le x_1, \dots, Y_n(t_k) \le x_k)$ converges to
	$\bP(Y(t_1) \le x_1, \dots, Y(t_k) \le x_k)$ as $n\to \infty$.
\end{lemma}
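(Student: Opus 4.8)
The plan is to exploit the fact that, once the sequence $(\alpha_k)$ is fixed (that is, $\bar\bP$-almost surely), both $Y_n$ and $Y$ are \emph{centered} Gaussian processes on $(\Omega,\cF,\bP)$, being Wiener integrals of the deterministic kernels $\tau\mapsto f_n(t-\tau)$ and $\tau\mapsto \cG_\rho(t-\tau)$ on $[0,t]$, where $f_n(u)=\frac1n\sum_{k=1}^n s_{\alpha_k}(u)$ and $\cG_\rho(u)=\bar\bE[s_\alpha(u)]$. Consequently each vector $(Y_n(t_1),\dots,Y_n(t_k))$ is a centered Gaussian vector whose law is entirely determined by its covariance matrix $\Sigma_n=(\bE[Y_n(t_i)Y_n(t_j)])_{i,j}$; likewise $(Y(t_1),\dots,Y(t_k))$ has law determined by $\Sigma=(\bE[Y(t_i)Y(t_j)])_{i,j}$. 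Since a sequence of centered Gaussian vectors converges in distribution precisely when the covariance matrices converge --- their characteristic functions being $\xi\mapsto\exp(-\tfrac12\,\xi^\top\Sigma_n\xi)$, so that entrywise convergence $\Sigma_n\to\Sigma$ gives pointwise convergence of characteristic functions and hence, by L\'evy's continuity theorem, convergence in distribution --- the whole lemma reduces to showing $\Sigma_n\to\Sigma$ entrywise, $\bar\bP$-a.s.

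First I would compute the covariances by the It\^o isometry: for $t_i,t_j\in[0,T]$,
\begin{equation*}
\bE[Y_n(t_i)Y_n(t_j)] = \int_0^{t_i\wedge t_j} f_n(t_i-\tau)\,f_n(t_j-\tau)\,\mathrm{d}\tau,
\end{equation*}
and the analogous identity holds for $Y$ with $f_n$ replaced by $\cG_\rho$. To control the difference of the two integrands I would use the elementary identity
\begin{equation*}
f_n(u)f_n(v) - \cG_\rho(u)\cG_\rho(v) = \big(f_n(u)-\cG_\rho(u)\big)f_n(v) + \cG_\rho(u)\big(f_n(v)-\cG_\rho(v)\big),
\end{equation*}
together with the uniform bound $|f_n|\le M$ coming from \eqref{eq:ML-bdd} (which also yields $|\cG_\rho|\le M$, as already noted in the proof of Theorem~\ref{thm:l2-conv-Yn}). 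This bounds the pointwise difference of the integrands by $2M\,\sup_{u\in[0,T]}|f_n(u)-\cG_\rho(u)|$.

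The key analytic input is then the uniform law of large numbers \cite[Lemma 2.4]{NewMcF1994HoE}, already invoked for Theorem~\ref{thm:l2-conv-Yn}: it furnishes a \emph{single} event of full $\bar\bP$-measure on which $\sup_{u\in[0,T]}|f_n(u)-\cG_\rho(u)|\to 0$. On that event, integrating the previous bound over $[0,t_i\wedge t_j]\subseteq[0,T]$ gives $|(\Sigma_n)_{ij}-\Sigma_{ij}|\le 2MT\,\sup_{[0,T]}|f_n-\cG_\rho|\to 0$, and this holds simultaneously for every finite collection of times $t_1,\dots,t_k$, since the full-measure event does not depend on them. Hence $\Sigma_n\to\Sigma$ and the Gaussian vectors converge in distribution. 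Finally, the limiting vector is non-degenerate: ordering $t_1<\dots<t_k$, on each interval $(t_{i-1},t_i)$ only the kernels with index $\ge i$ survive, and since $\cG_\rho$ is continuous with $\cG_\rho(0)=G_\rho(0)=1\neq0$ a triangular argument shows the kernels $\tau\mapsto\cG_\rho(t_i-\tau)$ to be linearly independent in $L^2$, so $\Sigma$ is positive definite and the limiting joint distribution function is continuous. Convergence in distribution therefore yields convergence of the distribution functions at \emph{every} point $(x_1,\dots,x_k)$, which is exactly the stated convergence of $\bP(Y_n(t_1)\le x_1,\dots,Y_n(t_k)\le x_k)$.

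The heart of the argument is not the Gaussian bookkeeping, which is standard, but the passage from the pointwise law of large numbers to its uniform-in-$t$ almost sure version on one common event: this is what allows a single $\bar\bP$-null exceptional set to serve all finite-dimensional marginals at once, and it is supplied by the uniform law of large numbers already used in Theorem~\ref{thm:l2-conv-Yn}. I expect no further serious obstacle; the only delicate point is the non-degeneracy needed in the last step, which is dispatched by the positivity $\cG_\rho(0)=1$ noted above.
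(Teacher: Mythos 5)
Your proposal is correct and takes essentially the same route as the paper: both reduce the lemma, via the Gaussianity of $Y_n$ and $Y$, to convergence of means and covariances, which the paper gets by citing Theorem~\ref{thm:l2-conv-Yn} and you re-derive directly from the same underlying ingredient, the $\bar\bP$-a.s.\ uniform convergence $\sup_{[0,T]}|f_n-\cG_\rho|\to 0$ furnished by the uniform law of large numbers. Your closing non-degeneracy argument, which upgrades convergence in distribution to convergence of the distribution functions at \emph{every} point, is a refinement the paper leaves implicit; just note that it presumes the $t_i$ distinct and positive, and that the excluded cases ($t_i=0$ or repeated times) are handled trivially because the identical degeneracy is present in $Y_n$ and $Y$ alike.
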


\begin{proof}
	Both $Y_n$ and $Y$ are Gaussian processes, so their finite dimensional distributions are fully determined by their mean and covariance. For any $k \in \bN$, $t_1, \dots, t_k \in [0,T]$, the mean and covariance of $Y_n(t_1),\ldots, Y_n(t_k)$ converge to mean and covariance of $Y(t_1),\ldots,Y(t_k)$, thanks to Theorem~\ref{thm:l2-conv-Yn} and the pointwise (in $t$) convergence of $Y_n(t) \to Y(t)$. 
\end{proof}

\begin{lemma}
	\label{le:tight}
	The moment condition \eqref{eq:tight} holds, and the family of empirical mean processes $\{Y_n\}_n$ is tight.
\end{lemma}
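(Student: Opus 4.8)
The plan is to exploit the conditional Gaussianity of the increments. Working $\bar\bP$-a.s.\ throughout, for fixed $s<t$ the increment $Y_n(t)-Y_n(s)$ is a Wiener integral, hence a centred Gaussian random variable on $(\Omega,\cF,\bP)$. Consequently all its absolute moments are determined by its variance through $\bE[|Y_n(t)-Y_n(s)|^\kappa]=c_\kappa\,\big(\bE[|Y_n(t)-Y_n(s)|^2]\big)^{\kappa/2}$, with $c_\kappa$ an absolute constant. It therefore suffices to prove a bound of the form $\bE[|Y_n(t)-Y_n(s)|^2]\le C\,|t-s|$ with $C$ independent of $n$ (though possibly depending on the realization of the $\alpha_k$), and then to choose $\kappa>2$: taking for instance $\kappa=4$ yields \eqref{eq:tight} with $\delta=1$.

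To estimate the variance I would split the increment, for $h:=t-s>0$, as
\[
Y_n(t)-Y_n(s)=\int_0^s\big(f_n(t-\tau)-f_n(s-\tau)\big)\,\mathrm{d}W(\tau)+\int_s^t f_n(t-\tau)\,\mathrm{d}W(\tau),
\]
and apply the It\^o isometry. After the change of variable $u=s-\tau$ in the first term this gives
\[
\bE[|Y_n(t)-Y_n(s)|^2]=\int_0^s\big(f_n(u+h)-f_n(u)\big)^2\,\mathrm{d}u+\int_s^t f_n(t-\tau)^2\,\mathrm{d}\tau.
\]
The second term is immediately controlled by the uniform bound $|f_n|\le M$ coming from \eqref{eq:ML-bdd}, so that $\int_s^t f_n(t-\tau)^2\,\mathrm{d}\tau\le M^2 h$.

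The core of the argument is the first term, which is the squared $L^2$ modulus of continuity of $f_n$. Here I would use the differentiation formula \eqref{eq:ML-deriv}: since $s_\alpha(t)=E_\rho(-\alpha t^\rho)$, the chain rule gives $s_\alpha'(t)=-\alpha t^{\rho-1}E_{\rho,\rho}(-\alpha t^\rho)$, and the bound \eqref{eq:ML2-bdd} yields $|s_\alpha'(t)|\le M_2\,\alpha t^{\rho-1}/(1+\alpha t^\rho)$. Setting $y=\alpha t^\rho$ and observing that $\alpha t^{\rho-1}=\alpha^{1/\rho}\,y^{1-1/\rho}$ while $y^{1-1/\rho}/(1+y)$ is bounded on $[0,\infty)$ for $\rho>1$, one obtains the uniform-in-$t$ estimate $|s_\alpha'(t)|\le C_\rho\,\alpha^{1/\rho}$. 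Averaging gives $|f_n'(t)|\le C_\rho\,\tfrac1n\sum_{k=1}^n\alpha_k^{1/\rho}$; since all moments of the Gamma law are finite, the strong law of large numbers ensures that $L:=\sup_n C_\rho\,\tfrac1n\sum_{k=1}^n\alpha_k^{1/\rho}$ is $\bar\bP$-a.s.\ finite. Hence $f_n$ is Lipschitz on $[0,T]$ with a constant $L$ uniform in $n$, and $\int_0^s(f_n(u+h)-f_n(u))^2\,\mathrm{d}u\le L^2 h^2\,s\le L^2 T^2\,h$.

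Combining the two estimates gives $\bE[|Y_n(t)-Y_n(s)|^2]\le(L^2T^2+M^2)\,h$, uniformly in $n$ and $\bar\bP$-a.s.; raising to the power $\kappa/2$ with $\kappa=4$ produces \eqref{eq:tight} with $\delta=1$ and a constant independent of $n$. Tightness of $\{Y_n\}_n$ in $C([0,T])$ then follows from the Billingsley criterion \cite[Theorem 12.3]{Bil1968}, the family being trivially tight at the origin because $Y_n(0)=0$ for all $n$. The main obstacle is precisely this uniform-in-$n$ regularity of $f_n$: one must control the $\alpha$-dependence in the Mittag-Leffler derivative bound and then promote the pointwise estimate to an almost sure uniform Lipschitz bound through the law of large numbers.
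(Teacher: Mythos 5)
Your argument is correct and is essentially the paper's own proof: the same splitting of the increment at time $s$, the bound $M^2|t-s|$ for the piece on $[s,t]$ via \eqref{eq:ML-bdd}, control of the piece on $[0,s]$ through the derivative formula \eqref{eq:ML-deriv} combined with \eqref{eq:ML2-bdd}, uniformity in $n$ from the law of large numbers applied to the $\alpha_k$, and the Gaussian boost to $\kappa=4$, $\delta=1$ followed by Billingsley's criterion. The only deviation is bookkeeping: where the paper rescales each $s_{\alpha_k}$ to $s_1$ via the change of variables $v=\alpha_k^{1/\rho}u$ and ends up with the empirical average of $\alpha_k^{2/\rho}$, you bound $|s_\alpha'(t)|\le C_\rho\,\alpha^{1/\rho}$ directly (precisely the estimate the paper derives later in the proof of Theorem \ref{th:path-conv}) and obtain a Lipschitz constant for the averaged kernel $f_n$ involving the square of the empirical average of $\alpha_k^{1/\rho}$; both quantities are $\bar\bP$-a.s.\ bounded in $n$ by the law of large numbers, so the two routes are interchangeable.
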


\begin{proof}
	First, we notice that by exploiting the Gaussianity of $Y_n$, it is sufficient to search an estimate for $\kappa = 2$. Further, we can reduce the problem to the estimate of a single $X_k$, since
	\begin{equation*}
		\bE[|Y_n(t) - Y_n(s)|^2] \le \frac1n \sum_{k=1}^n \bE[|X_k(t) - X_k(s)|^2].
	\end{equation*}
	Next, we write (for simplicity, we drop the index $k$ in the following computation)
	\begin{align*}
		\bE[|X(t) -& X(s)|^2] \\& = \bE \left| \int_0^s [s_\alpha(t-u) - s_\alpha(s-u)] \, \mathrm{d}W(u) + \int_s^t s_\alpha(t-u) \, \mathrm{d}W(u) \right|^2
		\\
		& \le 2 \int_0^s [s_\alpha(t-u) - s_\alpha(s-u)]^2 \, \mathrm{d}u + 2 \int_s^t [s_\alpha(t-u)]^2 \, \mathrm{d}u
		\\
		& \le 2 \int_0^s [s_\alpha(t-s+u) - s_\alpha(u)]^2 \, \mathrm{d}u + 2 \int_0^{t-s} [s_\alpha(u)]^2 \, \mathrm{d}u.
	\end{align*}
	Recall that, by estimate \eqref{eq:ML-bdd},
	$	|s_\alpha(t)| = |E_\rho(-\alpha t^\rho)| \le M$,
	hence the second integral in the right hand side of previous estimate is bounded by a constant $2M^2$ times $(t-s)$.
	As far as the first term is concerned, we get
	\begin{align*}
		\int_0^s [s_\alpha(t-&s+u) - s_\alpha(u)]^2 \, \mathrm{d}u
		\\
		& =
		\int_0^s [E_\rho(-\alpha (t-s+u)^\rho) - E_\rho(-\alpha u^\rho)]^2 \, \mathrm{d}u
		\\
		& \stackrel{\mathclap{[v = \alpha^{1/\rho} u]}}{=}
		\qquad\alpha^{-1/\rho} \int_0^{\alpha^{1/\rho} s} [E_\rho(-(\alpha^{1/\rho} (t-s) + v)^\rho) - E_\rho(-v^\rho)]^2 \, \mathrm{d}v
		\\
		& = \alpha^{-1/\rho} \int_0^{\alpha^{1/\rho} s} [s_1(\alpha^{1/\rho} (t-s) + v) - s_1(v)]^2 \, \mathrm{d}v
		\\
		& = \alpha^{-1/\rho} \int_0^{\alpha^{1/\rho} s}  \left| \int_v^{v + \alpha^{1/\rho} (t-s)} s_1'(x) \, \mathrm{d}x \right|^2 \, \mathrm{d}v.
	\end{align*}
	The derivative of the scalar resolvent kernel can be written in terms of the two-parametric Mittag-Leffler's function, see formula \eqref{eq:ML-deriv}
	\begin{equation*}
		s_1'(x) = \frac{d}{dx} E_\rho(- x^\rho) = - x^{\rho - 1}  E_{\rho,\rho}(-x^\rho).
	\end{equation*}
	Recalling the bound \eqref{eq:ML2-bdd} we see that
	\begin{equation*}
		\left| s'_1(x) \right| \le x^{\rho - 1} \frac{M_2}{1 + x^\rho} \le \frac{M_3}{1 + x},
	\end{equation*}
	for some constant $M_3$.
	\\
	In conclusion, we get
	\begin{equation*}
		\bE[|X(t) - X(s)|^2]
		\le [2 M_3^2 \alpha^{2/\rho} T^2 + 2 M^2] (t-s).
	\end{equation*}
	This implies
	\begin{equation*}
		\bE[|Y_n(t) - Y_n(s)|^2] \le K_n (t-s),
	\end{equation*}
	where the constant $K_n$ can be explicitly given as
	\begin{equation*}
		K_n = 2 M_3^2 T^2 \left( \frac1n \sum_{k=1}^n (\alpha_k)^{2/\rho} \right) + 2 M^2.
	\end{equation*}
	Notice that, $\bar\bP$-almost surely,
	\begin{equation*}
		K_n  \xrightarrow[n \to \infty]{} \bar K = 2 M_3^2 T^2 \bar\bE[ \alpha^{2/\rho} ] + 2 M^2.
	\end{equation*}
	It follows that for some $n_0 \in \bN$, $K_n \le \bar K + 1$ for any $n \ge n_0$. Set
	\begin{equation*}
		C(2,0) = \max_{n \in \{1, \dots, n_0\}} K_n \vee (\bar K + 1).
	\end{equation*}
	Since $Y_n(t) - Y_n(s)$ is a centred Gaussian random variable, by standard arguments we conclude that
	$\bE[|Y_n(t) - Y_n(s)|^{2n}] \le C(2n, n-1) |t-s|^{n}$ and
	$C(2n, n-1) = (2n-1)!! \, C(2,0)^n$.
\end{proof}

\subsection{Almost sure convergence of the empirical mean process in \texorpdfstring{$C([0,T])$}{C[0,T]}}

In this section, we consider the convergence of the paths of $Y_n$ to $Y$ in $C([0,T])$ with respect to the uniform topology.

\begin{theorem}
	\label{th:path-conv}
	In our assumptions, we have that
	\begin{equation*}
		\lim_{n \to \infty} \sup_{t \in [0,T]} |Y_n(t) - Y(t)| \to 0,
	\end{equation*}
	almost surely with respect to $\bP \times \bar\bP$.
\end{theorem}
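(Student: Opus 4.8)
\emph{Plan.} The idea is to trade the stochastic integral for an ordinary (pathwise) one by integration by parts, so that the randomness in $W$ factors out of the supremum and only the convergence of the deterministic kernels remains. Set
\[
h_n(t) = \frac1n\sum_{k=1}^n s_{\alpha_k}(t) - \cG_\rho(t), \qquad \cG_\rho(t)=\bar\bE[s_\alpha(t)],
\]
so that $Y_n(t)-Y(t)=\int_0^t h_n(t-\tau)\,\mathrm{d}W(\tau)$. Because $\rho>1$, formula \eqref{eq:ML-deriv} gives $s_\alpha'(t)=-\alpha\,t^{\rho-1}E_{\rho,\rho}(-\alpha t^\rho)$, which is continuous on $[0,T]$ with $s_\alpha'(0)=0$; hence $h_n\in C^1([0,T])$ and the Wiener integral may be integrated by parts. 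The boundary terms vanish, since $W(0)=0$ and $h_n(0)=\tfrac1n\sum_k s_{\alpha_k}(0)-\cG_\rho(0)=1-1=0$ (because $E_\rho(0)=G_\rho(0)=1$). This yields the pathwise representation
\[
Y_n(t)-Y(t)=\int_0^t h_n'(t-\tau)\,W(\tau)\,\mathrm{d}\tau .
\]

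First I would estimate, uniformly in $t\in[0,T]$,
\[
\sup_{t\in[0,T]}|Y_n(t)-Y(t)|\le \Big(\sup_{\tau\in[0,T]}|W(\tau)|\Big)\int_0^T|h_n'(u)|\,\mathrm{d}u\le T\,\Big(\sup_{[0,T]}|W|\Big)\,\sup_{u\in[0,T]}|h_n'(u)| .
\]
The factor $\sup_{[0,T]}|W|$ is $\bP$-almost surely finite by continuity of Brownian paths, so it suffices to prove $\sup_{[0,T]}|h_n'|\to 0$ $\bar\bP$-a.s.

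Then I would apply the uniform law of large numbers \cite[Lemma 2.4]{NewMcF1994HoE} to the family $t\mapsto s_\alpha'(t)$. It is continuous in $t$ on the compact set $[0,T]$, and \eqref{eq:ML2-bdd} supplies the envelope $|s_\alpha'(t)|\le M_2\,T^{\rho-1}\,\alpha$, which is $\bar\bP$-integrable since $\bar\bE[\alpha]=\mu/\lambda<\infty$ by \eqref{eq:7}. The same domination justifies differentiating under the expectation, so $\bar\bE[s_\alpha'(t)]=\cG_\rho'(t)$. Consequently
\[
\sup_{t\in[0,T]}\Big|\tfrac1n\sum_{k=1}^n s_{\alpha_k}'(t)-\cG_\rho'(t)\Big|=\sup_{[0,T]}|h_n'|\xrightarrow[n\to\infty]{}0\quad\bar\bP\text{-a.s.}
\]
Multiplying the $\bP$-a.s.\ finite factor by this $\bar\bP$-a.s.\ vanishing factor gives convergence $\bP\times\bar\bP$-a.s., as claimed.

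The main obstacle is the rigorous justification of the integration-by-parts step for the Wiener integral together with the regularity of the kernel at the origin: both hinge on $\rho>1$, which guarantees $s_\alpha'\in C([0,T])$ (with $s_\alpha'(0)=0$) and the integrability needed for the envelope. Checking the two hypotheses of the uniform law of large numbers (continuity in the parameter and an integrable envelope) is the remaining delicate point, but it reduces to the elementary moment bound $\bar\bE[\alpha]<\infty$.
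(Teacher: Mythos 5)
Your proposal is correct, and its core mechanism is the same as the paper's: integrate by parts to convert $Y_n-Y$ into the pathwise integral $\int_0^t h_n'(t-\tau)\,W(\tau)\,\mathrm{d}\tau$ (the boundary term vanishing since $f_n(0)=f(0)=1$), then control the derivative of the kernel difference; this is precisely the content of the paper's Lemma~\ref{le:conv} and the discussion preceding it. The one place you diverge is the concluding step. The paper verifies only \emph{pointwise} convergence of $\dot f_n(t)=\tfrac1n\sum_k s_{\alpha_k}'(t)$ to $\tfrac{d}{dt}G_\rho(-t^\rho/\lambda)$ (law of large numbers at each fixed $t$), together with the uniform bound $\sup_n\sup_{[0,T]}|\dot f_n|\le C$ coming from $|s_\alpha'(t)|\le M\alpha^{1/\rho}$ and the LLN for $\tfrac1n\sum_k\alpha_k^{1/\rho}$, and then concludes via dominated convergence. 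You instead apply the uniform law of large numbers of Newey--McFadden (the same lemma the paper already invokes in Theorem~\ref{thm:l2-conv-Yn}) to the family $t\mapsto s_\alpha'(t)$ with the integrable envelope $M_2T^{\rho-1}\alpha$, obtaining $\sup_{[0,T]}|h_n'|\to 0$ $\bar\bP$-a.s.\ directly, and then multiplying by the $\bP$-a.s.\ finite factor $\sup_{[0,T]}|W|$. Both arguments rest on the same structural facts ($\rho>1$ giving continuity of $s_\alpha'$ up to $t=0$ and the needed integrability); yours yields the slightly stronger uniform convergence of the derivative kernels and bypasses Lemma~\ref{le:conv}, at the cost of a cruder envelope ($\alpha$ in place of $\alpha^{1/\rho}$), which is harmless since the Gamma distribution has moments of all orders.
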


Our proof will follow from a somehow broader result.
Let $f_n$ be a sequence of smooth functions on $[0,T]$ that converges
(at least pointwise) to~$f$. We shall see later which further assumptions are required.
Then we compute
\begin{align*}
	(f_n * & \dot W)(t) - (f*\dot W)(t)
	\\
	& =
	f_n(t-s)W(s) \big|_{s=0}^{s=t} + (W * \dot f_n)(t)
	- f(t-s)W(s) \big|_{s=0}^{s=t} - (W * \dot f)(t)
	\\
	& = [f_n(0) - f(0)] W(t) + (W * \dot{(f_n - f)})(t).
\end{align*}
It follows that  there exists a constant $C$ finite $\bP$-a.s.\ such that
\begin{multline*}
	\sup_{t \in [0,T]} \left| (f_n * \dot W)(t) - (f*\dot W)(t)  \right|
	\\ \le C \left( |f_n(0) - f(0)| + \int_0^T \left| \dot{f_n}(s) - \dot{f}(s) \right| \, \mathrm{d}s \right).
\end{multline*}
Therefore, a set of sufficient conditions for the required convergence is given by
\begin{enumerate}
	\item $f_n(0) \to f(0)$,
	\item $\dot {f_n}(s) \to \dot{f}(s)$ pointwise in $[0,T]$, and
	\item $\displaystyle \sup_{n \in \bN} \sup_{s \in [0,T]} |\dot{f_n}(s)| \le C$.
\end{enumerate}
Then an application of Lebesgue's dominated convergence theorem shows that the following lemma holds.

\begin{lemma}
	\label{le:conv}
	Under conditions 1.~to 3., the stochastic convolution processes $X_n = (f_n * \dot W)$ converge in $C([0,T])$ (endowed with the uniform topology) to the process $X = (f * \dot W)$.
\end{lemma}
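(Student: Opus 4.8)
The plan is to read the conclusion off directly from the integration-by-parts identity established immediately above the statement, so that the only genuine work left is to control the two terms on the right-hand side of the displayed bound and to verify that the constant multiplying them is almost surely finite.

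First I would make the estimate explicit and pathwise in $W$. Starting from the decomposition $X_n(t) - X(t) = [f_n(0) - f(0)] W(t) + (W * \dot{(f_n - f)})(t)$, I bound the first summand in sup-norm by $|f_n(0) - f(0)| \sup_{t \in [0,T]} |W(t)|$, and the convolution term by pulling the supremum of $W$ outside the integral,
\[
\sup_{t \in [0,T]} \bigl| (W * \dot{(f_n - f)})(t) \bigr| \le \Bigl( \sup_{u \in [0,T]} |W(u)| \Bigr) \int_0^T |\dot{f_n}(s) - \dot{f}(s)| \, \mathrm{d}s .
\]
This produces the advertised bound with $C = \sup_{t \in [0,T]} |W(t)|$. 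The single point deserving a line of justification is that $C < \infty$ holds $\bP$-almost surely, by continuity of the Brownian paths on the compact interval $[0,T]$; this is exactly what legitimates the random constant appearing in the statement.

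Next I would let $n \to \infty$ term by term, working on the event $\{C < \infty\}$ of full $\bP$-measure. The first term tends to zero by hypothesis~1. For the integral term I would invoke Lebesgue's dominated convergence theorem on $[0,T]$: hypothesis~2 gives $\dot{f_n}(s) \to \dot{f}(s)$ pointwise, so that $|\dot{f_n}(s) - \dot{f}(s)| \to 0$ pointwise; hypothesis~3 furnishes a finite bound on $\sup_s |\dot{f_n}(s)|$ uniform in $n$, which passes to the pointwise limit to bound $|\dot{f}(s)|$ by the same quantity, so that $|\dot{f_n} - \dot{f}|$ is dominated on $[0,T]$ by twice this bound, a constant and hence integrable function. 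Consequently $\int_0^T |\dot{f_n}(s) - \dot{f}(s)| \, \mathrm{d}s \to 0$, the whole right-hand side vanishes, and $\sup_{t \in [0,T]} |X_n(t) - X(t)| \to 0$ $\bP$-almost surely.

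I do not expect a serious obstacle here: the heavy lifting -- the integration by parts that turns the Wiener integral into an ordinary convolution against the continuous path of $W$, and for which the smoothness of the $f_n$ is precisely what is needed -- has already been carried out before the statement. Once the almost-sure finiteness of $\sup_{t \in [0,T]} |W(t)|$ is granted, the remaining argument is purely deterministic in the realisation of $W$ and reduces to the two elementary limits above. I would close by noting that sup-norm convergence of $X_n - X$ to zero is exactly convergence in $C([0,T])$ for the uniform topology, which is the assertion of the lemma.
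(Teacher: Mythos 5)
Your proof is correct and follows essentially the same route as the paper: the integration-by-parts decomposition, the pathwise bound with $C = \sup_{t \in [0,T]} |W(t)|$ (finite $\bP$-a.s.\ by continuity of Brownian paths), and dominated convergence using conditions 1--3. You have merely made explicit the details the paper leaves implicit, including the observation that condition 3 passes to the limit to dominate $|\dot{f}|$ as well.
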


We are now ready to prove the main result of this section.

\begin{proof}[Proof of Theorem \ref{th:path-conv}]
	With the notation
	\begin{equation*}
		f_n(t) = \frac{1}{n} \sum_{k=1}^n s_{\alpha_k}(t), \qquad f(t) = G_\rho^\mu(-t^{\rho}/\lambda),
	\end{equation*}
	introduced above, we see that
	\begin{equation*}
		\dot{f_n}(t) =  \frac{1}{n} \sum_{k=1}^n \frac{\mathrm{d}}{\mathrm{d}t} s_{\alpha_k}(t),
	\end{equation*}
	and by using the identity $s_\alpha(t) = E_\rho(-\alpha t^{\rho})$
	and formula \eqref{eq:ML-deriv}
	we compute
	\begin{equation*}
		\tfrac{d}{dt} E_\rho(-\alpha t^\rho) = - \alpha t^{\rho - 1} E_{\rho,\rho}(-\alpha t^\rho),
	\end{equation*}
	where $E_{\rho,\rho}(x)$ is the two-parametric Mittag-Leffler's function;
	recalling the bound
	\eqref{eq:ML2-bdd}, we get
	\begin{equation*}
		\left| \tfrac{d}{dt} E_\rho(-\alpha t^\rho) \right| \le \alpha t^{\rho - 1} \frac{M_2}{1 + \alpha t^{\rho}},
	\end{equation*}
	which implies the bound
	\begin{equation*}
		\left| \tfrac{d}{dt} E_\rho(-\alpha t^\rho) \right| \le
		M \alpha^{1/\rho} \underbrace{\sup_{x > 0} \frac{x^{\rho-1}}{1+x^\rho}}_{\le 1}.
	\end{equation*}
	Since $\alpha$ has a Gamma distribution of parameters $\mu$ and $\lambda$, we compute
	\begin{align*}
		\bar\bE[\alpha^{1/\rho}] & = \frac{1}{\Gamma(\mu)} \int_0^\infty x^{1/\rho} \lambda e^{-\lambda x} \left(\lambda x\right)^{\mu-1} \, \mathrm{d}x
		\\
		& = \frac{1}{\lambda^{1/\rho} \Gamma(\mu)} \int_0^\infty \lambda e^{-\lambda x} \left(\lambda x\right)^{\mu+1/\rho-1} \, \mathrm{d}x
		= \frac{\Gamma(\mu + 1/\rho)}{\lambda^{1/\rho} \Gamma(\mu)}.
	\end{align*}
	
	Therefore, we get
	\begin{equation*}
		\sup_{t \in [0,T]} \left| \dot{f_n}(t) \right|
		\le M \, \frac1n \sum_{k=1}^n \left(\alpha_k\right)^{1/\rho};
	\end{equation*}
	by the law of large numbers in the space $(\bar\Omega, \bar\bP)$
	we obtain that
	\begin{equation*}
		\frac1n \sum_{k=1}^n \left(\alpha_k\right)^{1/\rho} \to \bar\bE[\alpha^{1/\rho}] < \infty,
	\end{equation*}
	hence there exists a constant
	$C = C(\bar\omega)$, finite $\bar\bP$-almost surely, such that
	\begin{equation*}
		\sup_{n \in \bN} \sup_{t \in [0,T]} \left| \dot{f_n}(t) \right|
		\le C.
	\end{equation*}
	
	By another application of the law of large numbers we get that the sequence
	\begin{equation*}
		\dot{f_n}(t) =  \frac{1}{n} \sum_{k=1}^n \frac{\mathrm{d}}{\mathrm{d}t} s_{\alpha_k}(t)
	\end{equation*}
	converges to
	$
	G_{\rho,\rho}^\mu(t) = \bar\bE \left[ -\alpha t^{\rho-1} E_{\rho,\rho}(-\alpha t^{\rho}) \right]$.
	Since
	$	G_\rho^\mu(t^\rho/\lambda) = \bar\bE \left[ E_\rho(-\alpha t^\rho) \right]$,
	taking the derivative inside the expectation  we
	obtain that
	\begin{equation*}
		\frac{\mathrm{d}}{\mathrm{d}t} G_\rho^\mu(t^\rho/\lambda) = G_{\rho,\rho}^\mu(t),
	\end{equation*}
	that is the pointwise convergence required in condition 2.
	\\
	Finally, since $f_n(0) = 1 = f(0)$ for every $n \in \bN$, we conclude by applying Lemma~\ref{le:conv}. 
\end{proof}

\section{Asymptotic behaviour and stationary solutions}
\label{sez4}

In this section we discuss the asymptotic behaviour as $t \to \infty$ of the stochastic process $Y(t)$ defined in \eqref{eq:4}.
We already know that $Y(t)$ is a Gaussian process with zero mean and variance given by
\begin{equation*}
	\sigma^2_t = \int_0^t \left| G_\rho^\mu(-u^\rho/\lambda)\right|^2 \, \mathrm{d}u = \int_0^t  \bar\bE[s_\alpha(u)]^2 \, \mathrm{d}u.
\end{equation*}
We are interested in the convergence of the laws $\cL(Y(t))$ as $t \to \infty$.

A standard assumption in this section will be the following:
\begin{equation}\label{eq:con-mu-rho}
	\mu > \frac{1}{2\rho}.
\end{equation}
\begin{remark}\label{rmk:condition}
	By a direct computation, we see that the random variable $\alpha^{-1/\rho}$ belongs to $L^2(\bar\Omega, \bar\bP)$ if and only if $\mu - 2\rho > 0$, {\sl i.e.}~condition \eqref{eq:con-mu-rho} holds.
\end{remark}

\begin{theorem}
	\label{th:converg}
	Assume that $\alpha$ has a Gamma distribution with parameters
	$\mu > \frac{1}{2\rho}$
	and $\lambda > 0$.
	There exists a Gaussian random variable $\eta$ such that $Y(t)$ converges in law to $\eta$ as $t \to \infty$.
\end{theorem}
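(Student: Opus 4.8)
The plan is to reduce the statement to the convergence of a single scalar quantity, the variance of $Y(t)$. Since $Y(t)$ is a centred Gaussian random variable, its law is completely determined by
\begin{equation*}
\sigma^2_t = \int_0^t \left| G_\rho(-u^\rho/\lambda)\right|^2 \, \mathrm{d}u,
\end{equation*}
and its characteristic function is $\bE[e^{i \xi Y(t)}] = \exp\bigl(-\tfrac12 \sigma^2_t \xi^2\bigr)$. By L\'evy's continuity theorem, convergence in law of $Y(t)$ to a Gaussian random variable $\eta$ is therefore equivalent to the convergence of $\sigma^2_t$ to a finite limit $\sigma^2_\infty$ as $t \to \infty$; in that case $\eta$ is centred Gaussian with variance $\sigma^2_\infty$. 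I would also note that $t \mapsto \sigma^2_t$ is non-decreasing, being the integral of the non-negative function $u \mapsto |G_\rho(-u^\rho/\lambda)|^2$, so it always admits a (possibly infinite) limit, and the whole problem collapses to proving that $\sigma^2_\infty = \int_0^\infty |G_\rho(-u^\rho/\lambda)|^2 \, \mathrm{d}u < \infty$.

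To estimate this integral I would split it at some fixed $A > 0$. On $[0,A]$ the integrand is bounded by $M^2$, thanks to the estimate $|G_\rho(-u^\rho/\lambda)| \le M$ already exploited in the proof of Theorem~\ref{thm:l2-conv-Yn}, so this portion contributes a finite amount and poses no difficulty. On $[A,\infty)$ I would invoke the asymptotic estimate from the Remark following Lemma~\ref{le:Cauchy-L2}, namely $|G_\rho(-x)| \le C(x^{-1} + x^{-\mu})$ as $x \to \infty$. Substituting $x = u^\rho/\lambda$ and squaring produces, up to constants, a sum of three terms decaying like $u^{-2\rho}$, $u^{-\rho(1+\mu)}$, and $u^{-2\rho\mu}$.

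The main obstacle, and indeed the heart of the matter, is the integrability at infinity of the slowest-decaying term. The term $u^{-2\rho}$ is integrable since $\rho > 1 > \tfrac12$; the mixed term $u^{-\rho(1+\mu)}$ has exponent $\rho(1+\mu) = \tfrac12(2\rho + 2\rho\mu)$, intermediate between the other two, so its integrability follows once the extreme terms are controlled; and the binding constraint comes from $u^{-2\rho\mu}$, whose tail integral $\int_A^\infty u^{-2\rho\mu}\,\mathrm{d}u$ is finite precisely when $2\rho\mu > 1$, that is, exactly under the standing assumption $\mu > \tfrac{1}{2\rho}$. This is what makes condition~\eqref{eq:con-mu-rho} the natural and sharp hypothesis. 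Assembling the two pieces yields $\sigma^2_\infty < \infty$ (and clearly $\sigma^2_\infty > 0$), whence the conclusion follows from the first paragraph via L\'evy's theorem, with $\eta$ centred Gaussian of variance $\sigma^2_\infty$.

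Finally, I would remark that the borderline case $\mu = 1$, which is excluded from the asymptotic estimate of the Remark, introduces at most a logarithmic correction in the bound for $G_\rho(-x)$; since $\int_A^\infty u^{-2\rho}(\log u)^2\,\mathrm{d}u$ is still finite for $\rho > \tfrac12$, this case is absorbed by a minor variant of the same argument and does not affect the result.
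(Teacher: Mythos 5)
Your proof is correct, but it takes a genuinely different route from the paper's. You reduce the theorem to the single analytic statement $\int_0^\infty |G_\rho(-u^\rho/\lambda)|^2\,\mathrm{d}u < \infty$ and conclude via monotonicity of $t \mapsto \sigma^2_t$ together with L\'evy's continuity theorem; the paper instead extends $W$ to a two-sided Wiener process, introduces the shifted convolutions $Y_{-s}(t)$ with $\cL(Y_{-s}(0)) = \cL(Y(s))$, and proves in Lemma~\ref{le:Cauchy-L2} that $\{Y_{-t}(0)\}$ is Cauchy in $L^2(\Omega,\bP)$, so that $\eta$ is obtained as an actual $L^2$-limit random variable, Gaussian by stability of Gaussian laws under this convergence. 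The analytic core is the same in both arguments, namely a tail estimate for $G_\rho(-u^\rho/\lambda) = \bar\bE[s_\alpha(u)]$ showing square integrability on $(0,\infty)$ exactly when $\mu > \frac{1}{2\rho}$ (given the standing assumption $\rho > 1$); but where you quote the bound $|G_\rho(-x)| \le C\left(x^{-1}+x^{-\mu}\right)$ from the Remark in Section~\ref{sec2}, the paper derives it, by pushing $|s_\alpha(u)| \le M/(1+\alpha u^\rho)$ through the Gamma density, splitting the resulting integral, and identifying the hypergeometric function $F_{2,1}(1,\mu,1+\mu,-u^\rho/\lambda)$, whose asymptotics yield precisely your three decay regimes, including the logarithmic correction at $\mu = 1$ that you handle separately. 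One caveat: in the paper that Remark is explicitly presented as a corollary of the computation in Lemma~\ref{le:Cauchy-L2}, so within the paper's logical organization your citation is circular; a self-contained version of your argument must incorporate the hypergeometric (or an equivalent) derivation of the decay of $G_\rho$. As for what each route buys: yours is shorter and stays on the original probability space, with no extension of $W$; the paper's construction produces $\eta$ as a genuine random variable, $\eta = \lim_{t\to\infty} Y_{-t}(0)$ in $L^2$, which is then reused to define and characterize the stationary process $\eta(t)$ in the remainder of Section~\ref{sez4} --- information that a purely distributional, characteristic-function argument does not provide.
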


We first extend the Wiener process $W(t)$ to a process with time $t \in \bR$. Let $\tilde W(t)$ be a standard Brownian motion on the space $(\Omega, \cF, \bP)$, independent of $W(t)$, and set for $t\ge 0$ 
$W(-t) = \tilde W(t)$. 

Now, for any $-s < 0$, define the stochastic convolution process
\begin{equation*}
	Y_{-s}(t) = \int_{-s}^t  \bar\bE[s_\alpha(t-u)] \, \mathrm{d}W(u).
\end{equation*}
It is easily seen that
$\cL( Y_{-s}(0) ) = \cL(Y(s))$, for $s \ge 0$.

In next lemma we prove that $\{Y_{-s}(0)\}$ forms a Cauchy sequence in the space $L^2(\Omega, \bP)$. To do so, we rely on 

\begin{lemma}
	\label{le:Cauchy-L2}
	Under our assumptions, for any $\varepsilon > 0$ there exists $T > 0$ such that, for any $t > s > T$, 
	$	\bE \left| Y_{-t}(0) - Y_{-s}(0) \right|^2 \le \varepsilon$.
\end{lemma}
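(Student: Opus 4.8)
The plan is to reduce the $L^2$ estimate first to a deterministic integral and then to the decay of $G_\rho(-x)$ for large $x$. For $t > s > 0$ the two integrals share the interval $[-s,0]$, so their difference is the Wiener integral over the extra piece,
\[
Y_{-t}(0) - Y_{-s}(0) = \int_{-t}^{-s} \bar\bE[s_\alpha(-u)] \, \mathrm{d}W(u),
\]
whose integrand is deterministic, being an expectation with respect to $\bar\bP$. Since $W$ has been extended to $\bR$ by gluing an independent copy $\tilde W$, it is a two-sided Brownian motion, so the Wiener isometry applies; after the change of variable $v = -u$ and using $\bar\bE[s_\alpha(v)] = G_\rho(-v^\rho/\lambda)$ this gives
\[
\bE\left| Y_{-t}(0) - Y_{-s}(0)\right|^2 = \int_{-t}^{-s} \bar\bE[s_\alpha(-u)]^2 \, \mathrm{d}u = \int_s^t \left| G_\rho(-v^\rho/\lambda)\right|^2 \, \mathrm{d}v.
\]

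Second, I would observe that the Cauchy property is immediate once the full integral $\int_0^\infty |G_\rho(-v^\rho/\lambda)|^2 \, \mathrm{d}v$ is shown to be finite: then, given $\varepsilon > 0$, one picks $T$ with $\int_T^\infty |G_\rho(-v^\rho/\lambda)|^2 \, \mathrm{d}v < \varepsilon$, and for $t > s > T$ the quantity above is at most this tail. Near $v = 0$ there is nothing to check, since $|G_\rho(-v^\rho/\lambda)| = |\bar\bE[s_\alpha(v)]| \le M$ by \eqref{eq:ML-bdd}, so $[0,1]$ contributes at most $M^2$. Everything therefore rests on integrability at infinity, hence on the rate of decay of $G_\rho(-x)$.

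Third, and this is the crux, I would establish the asymptotic bound $|G_\rho(-x)| \le C(x^{-1} + x^{-\mu})$ anticipated in the remark of Section~\ref{sec2}. Writing $G_\rho(-t^\rho/\lambda) = \bar\bE[E_\rho(-\alpha t^\rho)]$ and invoking $|E_\rho(-y)| \le M/(1+y)$ from \eqref{eq:ML-bdd} yields
\[
|G_\rho(-t^\rho/\lambda)| \le \frac{M \lambda^\mu}{\Gamma(\mu)} \int_0^\infty \frac{\alpha^{\mu-1} e^{-\lambda\alpha}}{1 + \alpha t^\rho} \, \mathrm{d}\alpha.
\]
Splitting at $\alpha = t^{-\rho}$ and bounding $1/(1+\alpha t^\rho)$ by $1$ on $\{\alpha < t^{-\rho}\}$ and by $1/(\alpha t^\rho)$ on $\{\alpha > t^{-\rho}\}$, the first piece contributes $O(t^{-\rho\mu})$, while the second contributes $O(t^{-\rho})$ when $\mu > 1$ (the residual $\alpha$-integral of $\alpha^{\mu-2}e^{-\lambda\alpha}$ converging at the origin) and $O(t^{-\rho\mu})$ when $\mu < 1$ (the divergence of $\int \alpha^{\mu-2}$ near the origin producing the extra power $t^{\rho(1-\mu)}$). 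In the variable $x = t^\rho/\lambda$ this is the claimed estimate, so the integrand $|G_\rho(-v^\rho/\lambda)|^2$ decays like $v^{-2\rho} + v^{-2\rho\mu}$ at infinity, and
\[
\int^\infty v^{-2\rho} \, \mathrm{d}v < \infty \iff \rho > \tfrac12, \qquad \int^\infty v^{-2\rho\mu} \, \mathrm{d}v < \infty \iff \mu > \tfrac{1}{2\rho}.
\]
Both hold under $\rho > 1$ and $\mu > 1/(2\rho)$, which is exactly where hypothesis \eqref{eq:con-mu-rho} enters.

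The main obstacle is the decay estimate of the third step, specifically the regime $\mu > 1$: there the naive large-$t$ limit of the $\alpha$-integral diverges, and one must keep the exponential weight $e^{-\lambda\alpha}$ of the Gamma density to cut off large $\alpha$ and recover the sharp $t^{-\rho}$ rate rather than a slower one. Once $|G_\rho(-x)| \le C(x^{-1} + x^{-\mu})$ is available, square-integrability of $v \mapsto G_\rho(-v^\rho/\lambda)$ on $[0,\infty)$ and therefore the Cauchy property follow directly from the reduction of the first two steps.
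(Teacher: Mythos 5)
Your proposal is correct in substance and follows the same overall route as the paper: reduce, via the Wiener isometry, to the deterministic integral $\int_s^t \left|\bar\bE[s_\alpha(u)]\right|^2 \mathrm{d}u$, then control the decay of $\bar\bE[s_\alpha(u)]$ by integrating the bound \eqref{eq:ML-bdd} against the Gamma density. Where you diverge is in the key estimate. The paper rescales the integration variable, splits at the fixed point $z=1$, recognizes the piece on $[0,1]$ as a hypergeometric function $F_{2,1}(1,\mu,1+\mu,-u^\rho/\lambda)$, and quotes its large-argument asymptotics to obtain the three regimes $u^{-\rho}$ ($\mu>1$), $u^{-\rho}\log u$ ($\mu=1$), $u^{-\mu\rho}$ ($\mu<1$). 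You instead split at the $t$-dependent point $\alpha = t^{-\rho}$ and use only elementary bounds; this is self-contained, avoids special-function asymptotics entirely, and in effect gives a direct proof of the bound $\left| G_\rho(-x) \right| \le C\left(x^{-1}+x^{-\mu}\right)$ that the paper only states as a remark (as a corollary of this very lemma). That is a modest but real advantage of your argument.

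One gap needs repair: you treat only $\mu>1$ and $\mu<1$, but the hypothesis $\mu > \frac{1}{2\rho}$ of the lemma allows $\mu = 1$, and there your claimed bound fails as stated, because the second piece acquires a logarithm: $t^{-\rho}\int_{t^{-\rho}}^\infty \alpha^{-1}e^{-\lambda\alpha}\,\mathrm{d}\alpha = O\!\left(t^{-\rho}\log t\right)$. The omission is harmless for the conclusion --- $t^{-2\rho}\log^2 t$ is still integrable at infinity for $\rho > \frac12$, so the tail argument goes through --- but the case must be stated, as the paper does via the $\mu=1$ branch of the hypergeometric asymptotics.
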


\begin{proof}
	Recall that
	\begin{equation*}
		Y_{-t}(0) = \int_{-t}^0 \bar\bE[s_\alpha(-u)] \, \mathrm{d}W(u)
	\end{equation*}
	is a centred Gaussian process, where $s_\alpha(t) = E_{\rho}(-\alpha t^{\rho})$ satisfies the bound
	\begin{equation*}
		|s_\alpha(t)| \le \frac{M}{1 + \alpha t^\rho}.
	\end{equation*}
	In order to prove the mean square convergence of $Y_{-t}(0)$ as $t \to \infty$, we need to prove that the quantity
	$	\bE[ \left| Y_{-t}(0) - Y_{-s}(0) \right|^2 ]$
	is bounded by a quantity that goes to 0 in $T$, for $t > s > T$.
	
	We estimate
	\begin{multline*}
		\bE[ \left| Y_{-t}(0) - Y_{-s}(0) \right|^2 ]\\
		= \int_{-s}^{-t} \left| \bar\bE[s_\alpha(-u)] \right|^2 \, \mathrm{d}u
		\int_s^t \left( \bar\bE \left|s_\alpha(u) \right| \right)^2 \, \mathrm{d}u
		\int_s^t \left( \bar\bE \left|\frac{M}{1 + \alpha u^\rho} \right| \right)^2 \, \mathrm{d}u.
	\end{multline*}
	Since $\alpha$ has a Gamma distribution with parameters $\mu$ and $\lambda$, we have
	\begin{align*}
		\bE[ | &Y_{-t}(0) - Y_{-s}(0) |^2 ] \\ & \le \int_s^t \left( \int_0^\infty \frac{M}{1 + y u^\rho} \lambda (\lambda y)^{\mu - 1} e^{-\lambda y} \, \mathrm{d}y \right)^2 \, \mathrm{d}u
		\\
		& \stackrel{\mathclap{[z = \lambda y]}}{=}\ \int_s^t \left( \frac{1}{\Gamma(\mu)} \int_0^\infty \frac{M}{1 + z \frac{u^\rho}{\lambda}}   z^{\mu - 1} e^{-z} \, \mathrm{d}z \right)^2 \, \mathrm{d}u
		\\
		& \le
		\frac{M^2}{\Gamma(\mu)^2} \int_s^t \left( \int_0^1 \frac{1}{1 + z \frac{u^\rho}{\lambda}} z^{\mu - 1} e^{-z} \, \mathrm{d}z + \int_1^\infty \frac{1}{1 + z \frac{u^\rho}{\lambda}} z^{\mu - 1} e^{-z} \, \mathrm{d}z \right)^2 \, \mathrm{d}u
		\\
		& \le
		\frac{M^2}{\Gamma(\mu)^2} \int_s^t \left( \int_0^1 \frac{1}{1 + z \frac{u^\rho}{\lambda}} z^{\mu - 1} \, \mathrm{d}z + \int_1^\infty \frac{1}{\frac{u^\rho}{\lambda}} z^{\mu - 1} e^{-z} \, \mathrm{d}z \right)^2 \, \mathrm{d}u
		\\
		& \le
		\frac{M^2}{\Gamma(\mu)^2} \int_s^t \left( F_{2,1}(1, \mu, 1+\mu, -\tfrac{u^\rho}{\lambda})
		+ \Gamma(\mu) \frac{\lambda}{u^\rho} \right)^2 \, \mathrm{d}u,
	\end{align*}
	where $F_{2,1}$ is the hypergeometric function; notice that, for large $u$, we have
	\begin{align*}
		F_{2,1}(1, \mu, 1+\mu, -\tfrac{u^\rho}{\lambda}) \sim
		\begin{cases}
			c_{\lambda,\mu} u^{-\mu \rho}, \qquad & 0 < \mu < 1 \\
			c_{\lambda,1} \frac{\log(u)}{u}, \qquad & \mu = 1 \\
			c_{\lambda,\mu} u^{-\rho}, \qquad & \mu > 1,
		\end{cases}
	\end{align*}
	hence
	\begin{multline*}
		\bE[ \left| Y_{-t}(0) - Y_{-s}(0) \right|^2 ] \\ \le C_{M,\mu,\lambda,\rho} \left(
		\int_s^t \left\{  u^{-2 \rho} \mathbbm{1}_{\mu > 1} + \frac{\log(u)^2}{u^2} \mathbbm{1}_{\mu=1} + u^{-2\mu \rho} \mathbbm{1}_{0< \mu < 1} \right\} \, \mathrm{d}u \right)
	\end{multline*}
	and the right hand side is bounded, for every $\mu > \frac{1}{2\rho}$, for every $t > s > T$, by
	$	C \, T^{1-2\rho(\mu \wedge 1)}$
	(for $\mu = 1$, the bound is given by $\frac{(2 + \log(T))^2}{T}$); under the assumption $\mu > \frac{1}{2\rho}$, the exponent is always negative, so the quantity
	$\bE[ \left| Y_{-t}(0) - Y_{-s}(0) \right|^2 ]$ can be taken arbitrarily small by choosing $T$ large enough. 
\end{proof}

\begin{proof}[Proof of Theorem \ref{th:converg}]
	The result follows from the previous lemma.
	We have seen that $\{Y_{-t}(0)\}$ is a Cauchy sequence, hence it converges in $L^2(\Omega, \bP)$ to some random variable~$\eta$.
	
	Now, a simple modification of the computations leading to Lemma \ref{le:Cauchy-L2} implies that
	\begin{equation*}
		\sup_{t > 0} \bE\left[ |Y_{-t}(0)|^2 \right] \le C < \infty,
	\end{equation*}
	and the stability of the Gaussian distribution with respect to the convergence in law implies that $\eta$ has a Gaussian distribution as well, with variance
	\begin{equation*}
		\sigma^2 = \sup_{t > 0} \bE\left[ |Y_{-t}(0)|^2 \right] = \lim_{t \to \infty} \bE\left[ |Y_{-t}(0)|^2 \right].
	\end{equation*}
	But, as already stated above, $Y(t) \sim Y_{-t}(0)$, and we obtain the thesis. 
\end{proof}

Let us consider the following modification of our setting.
Let $W(t)$ be a two-side Wiener process, and define
\begin{equation*}
	\xi_k(t) = \int_{-\infty}^t s_{\alpha_k}(t-s) \, \mathrm{d}W(s),
\end{equation*}
where, as opposite to \eqref{eq:def-Xk}, we are taking the integral on an infinite time interval $(-\infty,t)$.

\begin{proposition}
	The process $\xi_k(t)$ is a stationary Gaussian process with 
	\[
	\cL(\xi_k(t)) \sim \cN(0, \|s_\alpha(\cdot) \|_{L^2(\bR_+)}^2),
	\]
	$\bar\bP$-almost surely.
\end{proposition}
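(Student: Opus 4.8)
The plan is to fix a realization $\bar\omega \in \bar\Omega$, so that $\alpha_k$ becomes a fixed positive number, and to verify the three defining features of the claim at this frozen value: well-posedness of the Wiener integral, Gaussianity with the stated variance, and stationarity. Since the argument will work for every $\alpha_k > 0$, it holds $\bar\bP$-almost surely. The first and most delicate point is that the integral over the infinite horizon $(-\infty, t]$ makes sense, which amounts to checking that $s_{\alpha_k}(\cdot) \in L^2(\bR_+)$. Writing $s_\alpha(t) = E_\rho(-\alpha t^\rho)$ and invoking the global bound \eqref{eq:ML-bdd}, one has $|s_\alpha(t)| \le M/(1 + \alpha t^\rho)$, so $s_\alpha(t)^2$ decays like $t^{-2\rho}$ at infinity and is bounded near the origin by $s_\alpha(0)^2 = 1$. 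Since $2\rho > 1$ throughout our regime, the integrability at infinity is guaranteed, whence $\|s_{\alpha}\|_{L^2(\bR_+)} < \infty$. This makes $\xi_k(t) = \int_{-\infty}^t s_{\alpha_k}(t-s)\,\mathrm{d}W(s)$ a bona fide Wiener integral against the two-sided Brownian motion.

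Next I would record that $\xi_k(t)$, being a Wiener integral of the deterministic kernel $g_t(s) = s_{\alpha_k}(t-s)\mathbbm{1}_{\{s \le t\}}$, is a centred Gaussian random variable for each $t$, and that more generally every finite linear combination of the $\xi_k(t_j)$ is Gaussian, so the process is Gaussian. The variance is computed by the It\^o isometry and the change of variable $r = t - s$:
\[
\bE[\xi_k(t)^2] = \int_{-\infty}^t s_{\alpha_k}(t-s)^2\,\mathrm{d}s = \int_0^\infty s_{\alpha_k}(r)^2\,\mathrm{d}r = \|s_{\alpha_k}(\cdot)\|_{L^2(\bR_+)}^2,
\]
which is manifestly independent of $t$ and matches the claimed law $\cN(0, \|s_\alpha(\cdot)\|_{L^2(\bR_+)}^2)$.

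Finally, for stationarity it suffices, since the process is zero-mean Gaussian, to show that the covariance depends on the two times only through their difference. Taking $t' \le t$ and setting $h = t - t' \ge 0$, the It\^o isometry gives $\bE[\xi_k(t)\xi_k(t')] = \int_{-\infty}^{t'} s_{\alpha_k}(t-s)\, s_{\alpha_k}(t'-s)\,\mathrm{d}s$, and the substitution $u = t' - s$ turns this into $\int_0^\infty s_{\alpha_k}(u + h)\, s_{\alpha_k}(u)\,\mathrm{d}u$, a function of $h = t-t'$ alone. Hence the covariance is shift-invariant, and for a centred Gaussian process this is equivalent to strict stationarity of all finite-dimensional distributions. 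The only genuine obstacle is the integrability at infinity handled in the first step; once $s_\alpha \in L^2(\bR_+)$ is secured, the remaining steps are routine applications of the isometry and elementary changes of variables.
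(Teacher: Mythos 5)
Your proposal is correct and follows essentially the same route as the paper: both arguments reduce the proposition to the single nontrivial point that $\|s_{\alpha}(\cdot)\|_{L^2(\bR_+)}^2$ is finite, treating Gaussianity and shift-invariance of the covariance as routine consequences of the It\^o isometry. The only difference is that the paper obtains the key bound $\|s_\alpha(\cdot)\|^2_{L^2(\bR_+)} \le c\,\alpha^{-1/\rho}$ by citing Remark 3.2 of \cite{BonTub2003SAaA}, whereas you derive the integrability directly from the Mittag-Leffler estimate \eqref{eq:ML-bdd} (valid since $2\rho > 1$ in the regime considered), which makes your write-up self-contained but mathematically equivalent.
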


\begin{proof}
	The only point worth some care is the fact that the variance is finite.
	This follows from the computation in \cite{BonTub2003SAaA}, see in particular Remark 3.2 therein, where it is proved that
	\begin{equation*}
		\|s_\alpha(\cdot)\|^2_{L^2(\bR_+)} \le c \frac{1}{\alpha^{1/\rho}}. \qedhere
	\end{equation*}
\end{proof}

\begin{theorem}
	Assume that condition \eqref{eq:con-mu-rho} holds.
	Then the process
	\begin{equation}\label{eq:def-eta-t}
		\eta(t) = \int_{-\infty}^t G_\rho^\mu(-(t-s)^\rho/\lambda) \, \mathrm{d}W(s)
	\end{equation}
	is a stationary Gaussian process with $\cL(\eta(t)) = \cL(\eta) \sim \cN(0, \sigma^2)$,
	where $\eta$ is the random variable defined in Theorem \ref{th:converg}.
\end{theorem}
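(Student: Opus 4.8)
The plan is to realize $\eta(t)$ as a two-sided Wiener integral of a deterministic $L^2$ kernel and then read off Gaussianity, stationarity, and the identification with $\eta$ directly from the Itô isometry.

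First I would set $g(u) = G_\rho(-u^\rho/\lambda) = \bar\bE[s_\alpha(u)]$ for $u > 0$ and $g(u) = 0$ for $u \le 0$, so that $\eta(t) = \int_\bR g(t-s)\,\mathrm{d}W(s)$ against the two-sided Wiener process $W$ introduced above. Such an integral is a well-defined centred Gaussian random variable as soon as $g(t-\cdot) \in L^2(\bR)$, that is, as soon as $\sigma^2 := \int_0^\infty |G_\rho(-u^\rho/\lambda)|^2\,\mathrm{d}u < \infty$. This is precisely the finiteness already obtained in Lemma~\ref{le:Cauchy-L2}: writing $\bE|Y_{-t}(0)|^2 = \int_0^t |\bar\bE[s_\alpha(u)]|^2\,\mathrm{d}u$, the Cauchy estimate there shows this integral converges as $t \to \infty$ under condition \eqref{eq:con-mu-rho}, and its limit is exactly $\sigma^2$. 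Hence $\eta(t)$ is a centred Gaussian variable with variance $\sigma^2$ for every $t$, and since any finite linear combination of Wiener integrals is again Gaussian, the whole process $\{\eta(t)\}$ is Gaussian.

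Next I would establish stationarity by computing the covariance. As $\eta$ has zero mean, it suffices to show that the covariance depends only on the time lag. By the Itô isometry, for $t \le t'$ one has $\bE[\eta(t)\eta(t')] = \int_\bR g(t-s)\,g(t'-s)\,\mathrm{d}s$, and the substitution $u = t-s$ turns this into $\int_0^\infty g(u)\,g(u + (t'-t))\,\mathrm{d}u$, which depends on $t$ and $t'$ only through $t'-t$. A centred Gaussian process with constant mean and a covariance function of the difference of its arguments has shift-invariant finite-dimensional distributions, so $\eta$ is stationary.

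Finally I would identify the one-dimensional law. Evaluating at $t=0$ gives $\eta(0) = \int_{-\infty}^0 G_\rho(-(-s)^\rho/\lambda)\,\mathrm{d}W(s)$, which by definition is the $L^2(\Omega,\bP)$-limit of $\int_{-t}^0 \bar\bE[s_\alpha(-s)]\,\mathrm{d}W(s) = Y_{-t}(0)$; this is the very sequence whose limit defines $\eta$ in Theorem~\ref{th:converg}, so $\eta(0) = \eta$ almost surely and in particular $\cL(\eta(0)) = \cL(\eta) \sim \cN(0,\sigma^2)$. Combining this with the stationarity proved above yields $\cL(\eta(t)) = \cL(\eta(0)) = \cL(\eta)$ for every $t$, which is the assertion. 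The only genuine obstacle in this scheme is the finiteness of $\sigma^2$, i.e.\ the $L^2(\bR_+)$-membership of the kernel $u \mapsto G_\rho(-u^\rho/\lambda)$; this is exactly where hypothesis \eqref{eq:con-mu-rho} enters and is precisely what Lemma~\ref{le:Cauchy-L2} supplies, so once that lemma is invoked the remaining steps are routine isometry computations.
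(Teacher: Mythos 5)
Your proof is correct and takes essentially the same route as the paper: the paper's own proof is a one-line appeal to the computation in Lemma~\ref{le:Cauchy-L2}, which is precisely what you invoke to obtain $\sigma^2 = \int_0^\infty \left| G_\rho(-u^\rho/\lambda)\right|^2 \mathrm{d}u < \infty$ under condition~\eqref{eq:con-mu-rho}. The steps you spell out explicitly --- Gaussianity of Wiener integrals of an $L^2$ kernel, the covariance depending only on the time lag, and the identification of $\eta(0)$ with the $L^2(\Omega,\bP)$-limit of $Y_{-t}(0)$ from Theorem~\ref{th:converg} --- are exactly the standard facts the paper leaves implicit.
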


\begin{proof}
	This follows from the computation leading to the proof of Lemma~\ref{le:Cauchy-L2}
	(compare also to Remark~\ref{rmk:condition} for the necessity of condition \eqref{eq:con-mu-rho}). 
\end{proof}

\begin{theorem}\label{thm:thm6}
	Consider the empirical mean processes $\eta_n(t)$, defined by
	\begin{equation*}
		\eta_n(t) = \frac1n \sum_{k=1}^n \xi_k(t), \qquad t \in \bR.
	\end{equation*}
	Then, for every $t \in \bR$, the sequence $\eta_n(t)$
	converges in $L^2(\Omega, \bP)$ as $n \to \infty$ to the random variable $\eta(t)$ defined in \eqref{eq:def-eta-t}, $\bar\bP$-almost surely.
\end{theorem}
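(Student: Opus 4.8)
The plan is to reduce the statement, via the It\^o isometry, to a purely deterministic assertion about $L^2(\bR_+)$-convergence of convolution kernels, and then to control the infinite time horizon by a tail argument. Write
\begin{equation*}
f_n(u) = \frac1n\sum_{k=1}^n s_{\alpha_k}(u), \qquad f(u) = \bar\bE[s_\alpha(u)] = G_\rho(-u^\rho/\lambda),
\end{equation*}
so that $\eta_n(t) = \int_{-\infty}^t f_n(t-s)\,\mathrm dW(s)$ and $\eta(t) = \int_{-\infty}^t f(t-s)\,\mathrm dW(s)$. For fixed $\bar\omega$ the integrands are deterministic, and each $\xi_k(t)$ is well defined by the preceding Proposition; hence, by the It\^o isometry and the substitution $u = t-s$,
\begin{equation*}
\bE\bigl[|\eta_n(t)-\eta(t)|^2\bigr] = \int_0^\infty |f_n(u)-f(u)|^2\,\mathrm du,
\end{equation*}
which is independent of $t$. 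It therefore suffices to prove that $\|f_n - f\|_{L^2(\bR_+)}\to 0$ as $n\to\infty$, $\bar\bP$-almost surely. Throughout we work under condition~\eqref{eq:con-mu-rho}, which is exactly what guarantees $f\in L^2(\bR_+)$ (the finiteness of $\sigma^2$ established for $\eta$).

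I would split $\int_0^\infty = \int_0^T + \int_T^\infty$ for a threshold $T$ to be sent to infinity. On the compact part, the uniform law of large numbers already invoked in Theorem~\ref{thm:l2-conv-Yn} gives $\sup_{[0,T]}|f_n-f|\to 0$, hence $\int_0^T|f_n-f|^2\,\mathrm du\to 0$, $\bar\bP$-a.s.; for the limit kernel, $\int_T^\infty f^2\,\mathrm du\to 0$ as $T\to\infty$ since $f\in L^2(\bR_+)$. The only genuinely new point, and the main obstacle, is a tail bound on $f_n$ that is \emph{uniform in $n$}. A naive Cauchy--Schwarz bound $|f_n|^2\le\frac1n\sum_k s_{\alpha_k}^2$ is too wasteful, since it produces the coefficient $\|s_\alpha\|^2_{L^2(\bR_+)}\le c\,\alpha^{-1/\rho}$, which is $\bar\bP$-integrable only for $\mu>1/\rho$. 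Instead I would apply Minkowski's inequality,
\begin{equation*}
\Bigl(\int_T^\infty |f_n|^2\,\mathrm du\Bigr)^{1/2} \le \frac1n\sum_{k=1}^n \Bigl(\int_T^\infty s_{\alpha_k}(u)^2\,\mathrm du\Bigr)^{1/2},
\end{equation*}
so that only the first power $\|s_\alpha\|_{L^2(\bR_+)}\le\sqrt c\,\alpha^{-1/(2\rho)}$ enters, and $\alpha^{-1/(2\rho)}$ is $\bar\bP$-integrable exactly when $\mu>\tfrac1{2\rho}$, i.e.\ under~\eqref{eq:con-mu-rho}.

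To exploit this, the scaling $v=\alpha^{1/\rho}u$ in the bound $|s_\alpha(u)|\le M/(1+\alpha u^\rho)$ yields $\bigl(\int_T^\infty s_\alpha^2\bigr)^{1/2}=\alpha^{-1/(2\rho)}\,\Psi(\alpha^{1/\rho}T)^{1/2}=:h_T(\alpha)$, where $\Psi(a)=\int_a^\infty E_\rho(-v^\rho)^2\,\mathrm dv$ decreases to $0$. Since $0\le h_T(\alpha)\le\sqrt c\,\alpha^{-1/(2\rho)}\in L^1(\bar\bP)$, the ordinary law of large numbers gives $\frac1n\sum_k h_T(\alpha_k)\to\bar\bE[h_T(\alpha)]$, $\bar\bP$-a.s., for each fixed $T$, while dominated convergence gives $\bar\bE[h_T(\alpha)]\to 0$ as $T\to\infty$. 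A standard $\varepsilon$--$T$--$N$ argument then closes the proof: given $\varepsilon>0$, fix $T$ with $\int_T^\infty f^2<\varepsilon$ and $\bar\bE[h_T(\alpha)]<\varepsilon$; for $\bar\bP$-a.e.\ $\bar\omega$ and all large $n$ one has both $\int_0^T|f_n-f|^2<\varepsilon$ and $\bigl(\int_T^\infty|f_n|^2\bigr)^{1/2}<2\varepsilon$, whence $\limsup_n\|f_n-f\|^2_{L^2(\bR_+)}\lesssim\varepsilon$. Running this over a countable sequence $T_m\to\infty$ and intersecting the corresponding full-measure sets yields the almost sure convergence, and hence the theorem. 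One can alternatively bypass the tail splitting by polarization, showing $\|f_n\|^2_{L^2}\to\|f\|^2_{L^2}$ and $\langle f_n,f\rangle_{L^2}\to\|f\|^2_{L^2}$ a.s., but the convergence of $\|f_n\|^2$ requires a U-statistic law of large numbers together with a Marcinkiewicz--Zygmund estimate for the diagonal term, which is heavier machinery than the argument above.
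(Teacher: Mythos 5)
Your proof is correct, and it takes a genuinely different route from the paper's --- one that is in fact sharper. Both arguments begin identically: the It\^o isometry reduces the theorem to showing $\|f_n-f\|_{L^2(\bR_+)}\to 0$, $\bar\bP$-a.s. But the paper does not split the half-line. It invokes pointwise convergence plus dominated convergence, and to produce a majorant it moves the square inside the empirical mean (Jensen's inequality),
\begin{equation*}
\int_0^\infty |f_n(t)|^2\,\mathrm{d}t \;\le\; \frac{1}{n}\sum_{k=1}^n \int_0^\infty \frac{M^2}{(1+\alpha_k t^\rho)^2}\,\mathrm{d}t \;=\; \frac{M^2 C_\rho}{n}\sum_{k=1}^n \alpha_k^{-1/\rho},
\end{equation*}
and then appeals to the law of large numbers for the variables $\alpha_k^{-1/\rho}$. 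This is precisely the ``naive'' bound you rejected as too wasteful, and your objection is substantive: for a Gamma$(\mu,\lambda)$ variable one has $\bar\bE[\alpha^{-1/\rho}]<\infty$ if and only if $\mu>1/\rho$, whereas the standing assumption \eqref{eq:con-mu-rho} only gives $\mu>1/(2\rho)$; in the intermediate regime $1/(2\rho)<\mu\le 1/\rho$ the paper's majorant $\frac1n\sum_k \alpha_k^{-1/\rho}$ diverges $\bar\bP$-a.s., so the published proof (which asserts that \eqref{eq:con-mu-rho} guarantees $\bar\bE[\alpha^{-1/\rho}]<\infty$) has a gap exactly there. Your compact-plus-tail decomposition repairs this: Minkowski's inequality keeps only the first power $\|s_{\alpha_k}\|_{L^2(T,\infty)} \lesssim \alpha_k^{-1/(2\rho)}$, and $\alpha^{-1/(2\rho)}$ is $\bar\bP$-integrable exactly under \eqref{eq:con-mu-rho}; the uniform law of large numbers handles $[0,T]$, and your $\varepsilon$--$T_m$--$N$ scheme over a countable sequence $T_m\to\infty$ legitimately assembles the almost sure statement. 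In summary: the paper's route is shorter and adequate under the stronger moment condition $\mu>1/\rho$ (even there, strictly speaking, it bounds $\sup_n\int_0^\infty|f_n|^2$ rather than exhibiting a single dominating function, so one should invoke a generalized dominated convergence argument with the varying majorants $h_n(t)=\frac{M^2}{n}\sum_k(1+\alpha_k t^\rho)^{-2}$), while your argument costs one extra splitting step but proves the theorem under the hypothesis actually stated in the paper.
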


Before we provide the proof, let us recall a few known results about convergence of sequence of Gaussian random variables (see, e.g.,~\cite{Bal2017}).
\begin{lemma}
	\label{le:Baldi}
	Assume that $\{X_n\}$ is a sequence of Gaussian random variables converging in law to a random variable $X$. Then $X$ is also a Gaussian random variable.
	
	Assume that $X_n \sim \mathcal{N}(m_n, \sigma^2_n)$ for every $n$ and that $m_n \to m$ and $\sigma^2_n \to \sigma^2$ as $n \to \infty$. Then $X_n$ converges in law to $X \sim \mathcal{N}(m, \sigma^2)$.
	
	Assume further that $\sigma^2 = 0$. Then the convergence holds also in probability and in $L^2$, hence the convergence holds in $L^p$ for every $p > 0$.
\end{lemma}
\begin{proof}[Proof of Theorem \ref{thm:thm6}]
	By definition, the difference $\eta(t) - \eta_n(t)$ has a Gaussian distribution with zero mean and variance
	\begin{multline*}
		\delta^2_n = 
		\int_{-\infty}^t \left[ G_\rho^\mu(-(t-u)^\rho/\lambda) - \frac1n \sum_{k=1}^n s_{\alpha_k}(t-u) \right]^2 \, {\rm d}u \\
		= \int_0^\infty \left[ \bar{\mathbb{E}}[s_\alpha(u)] -  \frac1n \sum_{k=1}^n s_{\alpha_k}(u) \right]^2 \, {\rm d}u.
	\end{multline*}
	Taking into account the results in Lemma \ref{le:Baldi}, the claim of the theorem follows by proving that
	\begin{equation}
		\label{eq:da_provare}
		\lim_{n \to \infty} \int_0^\infty \left| \frac{1}{n} \sum_{k=1}^n s_{\alpha_k}(t) - \bar{\mathbb{E}}[s_{\alpha}(t)] \right|^2 \, {\rm d}t = 0
	\end{equation}
	holds $\bar{\mathbb{P}}$-a.s.
	
	As opposed to Theorem~\ref{thm:l2-conv-Yn}, here we deal with an infinite horizon convergence problem.
	However,
	the uniform law of large numbers  \cite[Lemma 2.4]{NewMcF1994HoE} implies that for every $m \in \mathbb{N}$ there exists a negligible set $N_m \subset \bar\Omega$ such that the convergence
	\begin{equation*}
		\sup_{t \in [m,m+1]} \left| \frac{1}{n} \sum_{k=1}^n s_{\alpha_k}(t) - \bar{\mathbb{E}}[s_\alpha(t)] \right| \to 0 
	\end{equation*}
	holds outside $N_m$.
	
	By setting $N = \bigcup_m N_m$ we have $\bar{\mathbb{P}}(N) = 0$ and the pointwise convergence 
	\begin{equation*}
		\left| \frac{1}{n} \sum_{k=1}^n s_{\alpha_k}(t) - \bar{\mathbb{E}}[s_\alpha(t)] \right| \to 0 
	\end{equation*}
	for $t \ge 0$, $\bar{\mathbb{P}}$-a.s.
	
	It remains to prove a dominated convergence theorem which holds $\bar{\mathbb{P}}$-a.s.
	For this purpose, let us define
	\begin{equation*}
		f_n(t) = \frac{1}{n} \sum_{k=1}^n s_{\alpha_k}(t),
	\end{equation*}
	which satisfies, by estimate \eqref{eq:ML-bdd},
	\begin{equation*}
		|f_n(t)| \le \frac{1}{n} \sum_{k=1}^n \frac{M}{1 + \alpha_k t^\rho}.
	\end{equation*}
	For simplicity, we split the estimate in two parts as follows:
	\begin{align*}
		|f_n(t)| \le M, \qquad t \in [0,1],
	\end{align*}
	and, for some $\varepsilon > 0$ to be chosen later, an explicit computation 
	leads to
	\begin{align*}
		|f_n(t)| \le C(M, \varepsilon, \rho) \frac{1}{n}\sum_{k=1}^n \alpha_k^{-(1+\varepsilon)/2\rho} \, t^{-(1 + \varepsilon)/2}, \qquad t \ge 1
	\end{align*}
	where $C(M, \varepsilon, \rho)$ is a constant depending only on the stated quantities.
	The sequence of random variables $\{\alpha_k^{-(1+\varepsilon)/2\rho}\}$ satisfies the law of large numbers provided that each of its term is integrable, i.e.~under the condition
	\begin{equation*}
		0< \varepsilon < 2\mu\rho - 1.
	\end{equation*}
	This is a nonempty interval thanks to the inequality in \eqref{eq:condition9}.
	
	Therefore, outside an event of zero measure we have, eventually in $n$,
	\begin{align*}
		|f_n(t)| \le f(t) \coloneqq C(M, \varepsilon, \rho) \left( \bar{\mathbb{E}}[\alpha^{-(1+\varepsilon)/2\rho}] + 1 \right) \, t^{-(1 + \varepsilon)/2}, \qquad t \ge 1
	\end{align*}
	and the function $f(t)$ is square integrable on $\mathbb{R}_+$.
	
	In order to conclude the proof, it remains to apply the dominated convergence theorem to the integrals in \eqref{eq:da_provare}. 
	The sequence of functions
	\begin{align*}
		\left| \frac{1}{n} \sum_{k=1}^n s(\alpha_k, t) - \bar{\mathbb{E}}[s(\alpha, t)] \right|^2
	\end{align*}
	converges pointwise to 0 and its dominated by the integrable function
	\begin{align*}
		|f(t)|^2 + |G^\mu_\rho(-t^\rho/\lambda)|^2
	\end{align*}
	(compare with \eqref{eq:G-is-square-int}). Then Lebesgue's Dominated Convergence Theorem implies that the limit in \eqref{eq:da_provare} holds.
\end{proof}

The next, and final, step is to prove that the weak convergence of the sequence of processes $\eta_n$ to $\eta$ holds in the space $C([0,T])$ of continuous functions on $[0,T]$, for fixed $T > 0$.

\begin{theorem}
	The sequence $\eta_n$ $\bP$-weakly converges to the process $\eta$ in the space $C([0,T])$, $\bar\bP$-almost surely.
\end{theorem}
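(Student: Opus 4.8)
The plan is to follow the same two-step scheme as in the proof of Theorem~\ref{th:weak-conv}, invoking \cite[Theorems 12.3 and 8.1]{Bil1968}: it suffices to establish (i) convergence of the finite-dimensional distributions and (ii) the tightness moment bound \eqref{eq:tight}. For (i), both $\eta_n$ and $\eta$ are centred Gaussian processes, so their finite-dimensional laws are determined by the covariances; since the previous theorem gives $\eta_n(t)\to\eta(t)$ in $L^2(\Omega,\bP)$ (hence $\bE[\eta_n(t)\eta_n(t')]\to\bE[\eta(t)\eta(t')]$ for each fixed $t,t'\in[0,T]$), $\bar\bP$-a.s., the finite-dimensional distributions converge exactly as in Lemma~\ref{le:finite-distrib}. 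The real work is condition (ii).

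For (ii), by Gaussianity it again suffices to treat $\kappa=2$, and by the convexity bound used in Lemma~\ref{le:tight} one reduces to a single term, $\bE[|\eta_n(t)-\eta_n(s)|^2] \le \frac1n\sum_{k=1}^n \bE[|\xi_k(t)-\xi_k(s)|^2]$. Dropping the index $k$ and writing $\alpha=\alpha_k$, I split the increment across the disjoint time intervals $(-\infty,s]$ and $(s,t]$; by the It\^o isometry the two contributions are orthogonal and, after the substitutions $v=s-u$ and $v=t-u$,
\[
\bE[|\xi(t)-\xi(s)|^2] = \int_0^\infty [s_\alpha(t-s+v)-s_\alpha(v)]^2\,\mathrm{d}v + \int_0^{t-s}[s_\alpha(v)]^2\,\mathrm{d}v .
\]
The second integral is at most $M^2(t-s)$ by \eqref{eq:ML-bdd}.

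The crux is the first integral, which now runs over the whole half-line rather than over $[0,s]$ as in Lemma~\ref{le:tight}. Using $s_\alpha(v)=s_1(\alpha^{1/\rho}v)$ and the scaling $w=\alpha^{1/\rho}v$, it equals $\alpha^{-1/\rho}\int_0^\infty[s_1(\beta+w)-s_1(w)]^2\,\mathrm{d}w$ with $\beta=\alpha^{1/\rho}(t-s)$. Writing $s_1(\beta+w)-s_1(w)=\int_w^{w+\beta}s_1'(x)\,\mathrm{d}x$, an application of Cauchy--Schwarz and Fubini bounds this by $\beta^2\|s_1'\|_{L^2(\bR_+)}^2$. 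Here the key finiteness input is that $s_1'\in L^2(\bR_+)$: the bound $|s_1'(x)|\le M_3/(1+x)$ already established in Lemma~\ref{le:tight} is square-integrable on $\bR_+$. Hence the first integral is at most $\alpha^{1/\rho}(t-s)^2\|s_1'\|_{L^2(\bR_+)}^2 \le \alpha^{1/\rho}T\|s_1'\|_{L^2(\bR_+)}^2(t-s)$ on $[0,T]$, and combining the two pieces gives $\bE[|\eta_n(t)-\eta_n(s)|^2]\le K_n(t-s)$ with $K_n = T\|s_1'\|_{L^2(\bR_+)}^2\,\tfrac1n\sum_{k=1}^n\alpha_k^{1/\rho}+M^2$. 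Since $\bar\bE[\alpha^{1/\rho}]=\Gamma(\mu+1/\rho)/(\lambda^{1/\rho}\Gamma(\mu))<\infty$, the law of large numbers on $(\bar\Omega,\bar\bP)$ gives $K_n\to\bar K<\infty$ $\bar\bP$-a.s., so $\sup_n K_n<\infty$ $\bar\bP$-a.s.

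From here the conclusion is routine and identical to Lemma~\ref{le:tight}: the Gaussianity of $\eta_n(t)-\eta_n(s)$ upgrades the second-moment bound to $\bE[|\eta_n(t)-\eta_n(s)|^{2m}]=(2m-1)!!\,(\bE[|\eta_n(t)-\eta_n(s)|^2])^m\le (2m-1)!!\,\bar K^m(t-s)^m$, which is \eqref{eq:tight} with $\kappa=2m$ and $\delta=m-1\ge1$ for any $m\ge2$. Together with the finite-dimensional convergence this yields $\bP$-weak convergence in $C([0,T])$, $\bar\bP$-a.s. I expect the only genuinely new point, compared with Lemma~\ref{le:tight}, to be the control of the increment integral over the infinite horizon $(-\infty,s]$: in the non-stationary case the finite length of the integration interval was used, whereas here one must instead exploit the square-integrability of $s_1'$. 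Note that this step needs only $\bar\bE[\alpha^{1/\rho}]<\infty$, which holds for every Gamma law, so condition \eqref{eq:con-mu-rho} enters only through the well-definedness of $\eta$ and the finite-dimensional convergence, not through the tightness estimate.
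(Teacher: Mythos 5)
Your proof is correct, and its overall architecture is the same as the paper's: finite-dimensional convergence from Gaussianity plus the $L^2$-convergence of the previous theorem, then the moment bound \eqref{eq:tight} via the reduction to $\kappa=2$ and to a single $\xi_k$, and the splitting of the increment over $(-\infty,s]$ and $(s,t]$. Where you genuinely diverge is in the one step the paper itself flags as the key novelty: controlling $\int_0^\infty[s_\alpha(t-s+v)-s_\alpha(v)]^2\,\mathrm{d}v$ over the infinite horizon. After the same scaling reduction to $\alpha^{-1/\rho}\int_0^\infty\bigl|\int_w^{w+\beta}s_1'(x)\,\mathrm{d}x\bigr|^2\,\mathrm{d}w$ with $\beta=\alpha^{1/\rho}(t-s)$, the paper splits the outer integral at $w=1$, using the crude bound $|s_1'|\le M_3$ on $[0,1]$ and, on $[1,\infty)$, integrating the decay $|s_1'(x)|\le M_3/(1+x)$ to get a logarithmic increment that is then tamed by $\log(1+x)\le x$; both pieces yield $M_3^2\alpha^{1/\rho}(t-s)^2$. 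You instead apply Cauchy--Schwarz to the inner integral and Fubini to the outer one, reducing everything to $\|s_1'\|_{L^2(\bR_+)}^2<\infty$, which follows from the same pointwise bound \eqref{eq:ML2-bdd}. The two routes use identical inputs and produce the same bound $C\,\alpha^{1/\rho}(t-s)^2$ (yours with a marginally better constant, since you also exploit orthogonality of the two stochastic integrals rather than $(a+b)^2\le 2a^2+2b^2$); your translation-in-$L^2$ argument is the more standard one and works verbatim for any kernel with square-integrable derivative, while the paper's is more hands-on and tracks the actual decay rate. Two small points: in your final display the constant should be $(\sup_n K_n)^m$ rather than $\bar K^m$ (you did establish $\sup_n K_n<\infty$ $\bar\bP$-a.s.; the paper handles this pedantically via its $C(2,0)$ construction in Lemma~\ref{le:tight}), and your closing remark that tightness needs only $\bar\bE[\alpha^{1/\rho}]<\infty$, with \eqref{eq:con-mu-rho} entering only through the definition of $\eta$ and the $L^2$ theory behind the finite-dimensional limits, is correct and consistent with the paper.
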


Notice that the proof is quite similar to the one of Theorem~\ref{th:weak-conv}: a consequence of the convergence of finite-dimensional distributions and tightness of the laws. In the sequel we shall only emphasise the differences in computations.

The convergence of the finite dimensional distributions follows from the fact that all the processes involved are Gaussian and thus we have established $L^2$ and pointwise convergence of $\eta_n(t)$ to $\eta(t)$ for all $t$, as in Lemma~\ref{le:finite-distrib}.

The next result is the analogue of Lemma~\ref{le:tight} and deals with the tightness estimate corresponding to \eqref{eq:tight}.
\begin{lemma}
	There exist constants $\delta, \kappa > 0$ such that for all $s < t \in [0,T]$
	\begin{equation*}
		\bE[|\eta_n(t) - \eta_n(s)|^\kappa] \le C(\kappa, \delta) |t-s|^{1+\delta},
	\end{equation*}
	where $C(\kappa, \delta)$ depends on the constants $\kappa$ and $\delta$ only.
\end{lemma}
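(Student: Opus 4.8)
The plan is to mirror the proof of Lemma~\ref{le:tight}, the only genuinely new feature being that the convolution defining $\xi_k$ now runs over the infinite half-line $(-\infty,s]$ rather than over $[0,s]$. First I would exploit Gaussianity: since $\eta_n(t)-\eta_n(s)$ is centred Gaussian, it suffices to bound its second moment, because for even $\kappa = 2m$ one has $\bE[|\eta_n(t)-\eta_n(s)|^{2m}] = (2m-1)!!\,(\bE[|\eta_n(t)-\eta_n(s)|^2])^m$, and choosing $m \ge 2$ produces the exponent $1+\delta = m > 1$. By convexity,
\[
\bE[|\eta_n(t)-\eta_n(s)|^2] \le \frac1n\sum_{k=1}^n \bE[|\xi_k(t)-\xi_k(s)|^2],
\]
so it is enough to estimate a single $\xi = \xi_k$ (dropping the index $k$).

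Next I would apply the It\^o isometry to the decomposition $\xi(t)-\xi(s) = \int_{-\infty}^s [s_\alpha(t-u)-s_\alpha(s-u)]\,\mathrm{d}W(u) + \int_s^t s_\alpha(t-u)\,\mathrm{d}W(u)$, whose two integrals run over disjoint time intervals and are therefore independent. After the substitutions $v = s-u$ and $v = t-u$ this gives
\[
\bE[|\xi(t)-\xi(s)|^2] = \int_0^\infty [s_\alpha(v+(t-s))-s_\alpha(v)]^2\,\mathrm{d}v + \int_0^{t-s} s_\alpha(v)^2\,\mathrm{d}v.
\]
The second term is at once $\le M^2(t-s)$ by \eqref{eq:ML-bdd}. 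The main obstacle is the first term: in Lemma~\ref{le:tight} the analogous integral lived on a bounded interval, so the crude bound $|s_1'|\le M_3$ sufficed, but here the domain is all of $\bR_+$ and that estimate yields a divergent integral.

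To control it I would instead use the square-integrability of the derivative. Writing $s_\alpha(v+h)-s_\alpha(v) = \int_v^{v+h} s_\alpha'(x)\,\mathrm{d}x$ with $h = t-s$, Cauchy--Schwarz followed by Fubini gives
\[
\int_0^\infty [s_\alpha(v+h)-s_\alpha(v)]^2\,\mathrm{d}v \le h\int_0^\infty s_\alpha'(x)^2 \min(x,h)\,\mathrm{d}x \le h^2\int_0^\infty s_\alpha'(x)^2\,\mathrm{d}x.
\]
The scaling $s_\alpha(x) = s_1(\alpha^{1/\rho}x)$ yields $s_\alpha'(x) = \alpha^{1/\rho} s_1'(\alpha^{1/\rho}x)$, whence $\int_0^\infty s_\alpha'(x)^2\,\mathrm{d}x = \alpha^{1/\rho}\int_0^\infty s_1'(y)^2\,\mathrm{d}y =: C_0\,\alpha^{1/\rho}$, and this constant is finite because the bound $|s_1'(y)| \le M_3/(1+y)$ (exactly the one derived in Lemma~\ref{le:tight} from \eqref{eq:ML-deriv} and \eqref{eq:ML2-bdd}) gives $C_0 \le M_3^2\int_0^\infty (1+y)^{-2}\,\mathrm{d}y = M_3^2$. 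Using $h = t-s \le T$, the first term is bounded by $C_0 T\alpha^{1/\rho}(t-s)$, so altogether
\[
\bE[|\xi(t)-\xi(s)|^2] \le \left[C_0 T\alpha^{1/\rho} + M^2\right](t-s).
\]

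Finally I would average over $k$ and pass to the limit. Setting $\tilde K_n = C_0 T\,\frac1n\sum_{k=1}^n \alpha_k^{1/\rho} + M^2$, the law of large numbers on $(\bar\Omega,\bar\bP)$ gives $\tilde K_n \to C_0 T\,\bar\bE[\alpha^{1/\rho}] + M^2$, $\bar\bP$-almost surely, where $\bar\bE[\alpha^{1/\rho}] = \Gamma(\mu+1/\rho)/(\lambda^{1/\rho}\Gamma(\mu)) < \infty$; note that, just as in Lemma~\ref{le:tight}, no constraint on $\mu$ is needed here, only the a.s.\ finiteness of this mean. As in Lemma~\ref{le:tight} the sequence $\tilde K_n$ is therefore $\bar\bP$-a.s.\ bounded, which furnishes a constant $C(2,0)$, finite $\bar\bP$-almost surely, with $\bE[|\eta_n(t)-\eta_n(s)|^2]\le C(2,0)(t-s)$ for all $n$; the Gaussian moment identity then yields the claim with, say, $\kappa = 2m$ and $\delta = m-1$ for any $m \ge 2$. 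I expect the only delicate point to be the replacement of the uniform derivative bound by the $L^2(\bR_+)$ bound on $s_1'$ via the Cauchy--Schwarz/Fubini step; everything else parallels the computations already carried out for the non-stationary processes.
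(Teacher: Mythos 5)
Your proof is correct, and its skeleton --- reduction to second moments via Gaussianity, convexity in $k$, the decomposition of $\xi(t)-\xi(s)$ into integrals over $(-\infty,s]$ and $[s,t]$, the law of large numbers for the random constants, and the Gaussian moment identity --- coincides with the paper's. The genuine difference lies at the crux, the bound for $\int_0^\infty [s_\alpha(u+h)-s_\alpha(u)]^2\,\mathrm{d}u$ with $h=t-s$, which is exactly where this lemma departs from Lemma~\ref{le:tight}. The paper rescales to $s_1$, splits the resulting integral at $v=1$, uses the crude bound $|s_1'|\le M_3$ on $[0,1]$ and, on $[1,\infty)$, integrates $|s_1'(x)|\le M_3/(1+x)$ into a logarithmic increment that it tames with $\log(1+x)\le x$. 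You instead apply Cauchy--Schwarz to $\int_v^{v+h}s_\alpha'(x)\,\mathrm{d}x$ followed by Tonelli, reducing the whole estimate to the single analytic fact that $s_1'$ is square-integrable on $\bR_+$, via the scaling identity $\int_0^\infty s_\alpha'(x)^2\,\mathrm{d}x=\alpha^{1/\rho}\|s_1'\|_{L^2(\bR_+)}^2\le\alpha^{1/\rho}M_3^2$; note that this finiteness rests on the same bound \eqref{eq:ML2-bdd} the paper invokes. Both routes land on a bound of the form $C\alpha^{1/\rho}(t-s)^2$ for this term and then conclude identically. What your version buys: no case splitting, a transparent identification of the one needed input ($s_1'\in L^2(\bR_+)$), and a coherent exponent $\alpha^{1/\rho}$ throughout (the paper's concluding display carries $\alpha^{2/\rho}$, apparently copied from Lemma~\ref{le:tight}, although its own intermediate estimate agrees with yours; the discrepancy is harmless since a Gamma random variable has finite positive moments of every order). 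Your observation that no constraint on $\mu$ is required here also matches the paper, which likewise uses only positive moments of $\alpha$ in this lemma. One remark applying equally to both proofs: the constant $C(\kappa,\delta)$ is only $\bar\bP$-almost surely finite, since it depends on the realization of the $\alpha_k$'s; this is the sense in which the statement is to be read.
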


\begin{proof}
	By an analysis of the proof of Lemma~\ref{le:tight}, we see that the key point is to estimate the quantity
	\begin{equation*}
		\begin{split}
			\bE[|\xi(t) -& \xi(s)|^2] \\&=  \bE \left| \int_{-\infty}^s [s_\alpha(t-u) - s_\alpha(s-u)] \, \mathrm{d}W(u) + \int_s^t s_\alpha(t-u) \, \mathrm{d}W(u) \right|^2
			\\
			&\le  2 \int_0^\infty [s_\alpha(t-s+u) - s_\alpha(u)]^2 \, \mathrm{d}u + 2 \int_0^{t-s} [s_\alpha(u)]^2 \, \mathrm{d}u
		\end{split}
	\end{equation*}
	and, in particular, the first term, where now the integration's interval is~$\bR_+$.
	As far as the first term is concerned, we get
	\begin{equation*}
		\int_0^\infty [s_\alpha(t-s+u) - s_\alpha(u)]^2 \, \mathrm{d}u
		= \alpha^{-1/\rho} \int_0^{\infty}  \left| \int_v^{v + \alpha^{1/\rho} (t-s)} s_1'(x) \, \mathrm{d}x \right|^2 \, \mathrm{d}v.
	\end{equation*}
	We recall the bound
	\begin{equation*}
		\left| s'_1(x) \right| \le x^{\rho - 1} \frac{M_2}{1 + x^\rho} \le \frac{M_3}{1 + x},
	\end{equation*}
	and divide the integral in two parts, for small $v$ (say, $v \in [0,1]$) and for large $v$ (say $v \ge 1$).
	We thus estimate the right hand side of the previous estimate by
	\begin{multline*}
		R = \alpha^{-1/\rho} \int_0^1 \left| M_3 \alpha^{1/\rho}(t-s) \right|^2 \, \mathrm{d}v\\
		+ M_3^2 \alpha^{-1/\rho} \int_1^\infty \left| \log(v+ \alpha^{1/\rho}(t-s)) - \log(v) \right|^2 \, \mathrm{d}v.
	\end{multline*}
	Recalling that $\log(1+x) \le x$ for all $x > 0$,
	\begin{equation*}
		R \le M_3^2 \alpha^{1/\rho} (t-s)^2 + M_3^2 \alpha^{-1/\rho} \int_1^\infty \left| \frac{\alpha^{1/\rho}(t-s)}{v} \right|^2 \, \mathrm{d}v
		\le 2M_3^2 \alpha^{1/\rho} (t-s)^2.
	\end{equation*}
	In conclusion, we get
	$	\bE[|\xi(t) - \xi(s)|^2]
	\le [2 M_3^2 \alpha^{2/\rho} T^2 + 2 M^2] (t-s)$,
	which further implies
	$	\bE[|\eta_n(t) - \eta_n(s)|^2] \le K_n (t-s)$
	and, finally, we obtain
	\begin{equation*}
		\bE[|\eta_n(t) - \eta_n(s)|^{2n}] \le C_{n} |t-s|^{n}. \qedhere
	\end{equation*}
\end{proof}

\section*{Acknowledgments}
	We would like to thank prof.~Luisa Beghin for a fruitful discussion and for pointing us towards Wright-Fox generalized functions, as well as the anonymous referees for their comments and remarks, which helped us in improving the paper.


\end{document}